\newtheorem{thm}{Theorem}[section]
\newtheorem{lem}[thm]{Lemma}
\newtheorem{rem}[thm]{\bf{Remark}}
\numberwithin{equation}{section}
\theoremstyle{theorem}
\newtheorem{theorem}{Theorem}[section]
\newtheorem{proposition}[theorem]{Proposition}
\newtheorem{corollary}[theorem]{Corollary}
\theoremstyle{definition}
\newtheorem*{remark*}{Remark}
\numberwithin{equation}{section}
\author{Elham Matinpour}
\address{Department of Mathematics, Johns Hopkins University, 3400 N. Charles Street, Baltimore, MD 21218}
\email{ematinp1@jhu.edu}
\title{On the Area of immersed Minimal annuli in a slab}
\begin{document}

\maketitle

\begin{abstract}
In this note, we organize minimal annuli in a slab based on the winding number of the circles that foliate them and study the area of minimal annuli with given winding number. Specifically, we deduce some results regarding the convexity of the length function of corresponding level curves, and apply them to estimate the area of the annuli by comparing to the area of waist of covers of catenoids.
\end{abstract}

{\footnotesize
\textbf{2020 Mathematics Subject Classification.} Primary 53A10; Secondary 53A05\\
\textbf{Key words and phrases.} minimal annuli, catenoid, winding number, area estimates, minimal surfaces, differential geometry
}

\section{Introduction}
The first nontrivial minimal surface discovered is an annulus in $\mathbb{R}^3$, the catenoid. A result of the work of Schoen \cite{schoen1983uniqueness} features the catenoid as the unique minimal surface with finite total curvature and two embedded ends, and Lopez and Ros \cite{lopez1991embedded} show that the catenoid is the unique complete planar domain of finite total curvature embedded in $\mathbb{R}^3$. There are more examples of minimal annuli embedded in a slab such as a neck of a Riemann example. Rotationally symmetric compact pieces of the catenoid are characterized in a slab in several ways. For instance, Pyo showed that the catenoid in a slab is the only minimal annulus meeting the boundary of the slab in a constant angle \cite{pyo2009minimal}. Also it was proved by Bernstein and Breiner that all embedded minimal annuli in a slab have area bigger than or equal to the minimum area of the catenoids in the same slab \cite{bernstein2014variational}. That minimum is attained by the catenoidal waist along the boundary of which the rays from the center of the slab are tangent to the waist. This waist is marginally stable because it is stable but any subset of the catenoid properly containing the waist is unstable, we call them the maximally symmetric marginally stable piece of catenoid in that slab. Bernstein and Breiner used Osserman-Schiffer's theorem [Theorem 1., \cite{osserman1975doubly}] on the convexity of the length of the circles foliating the minimal annulus. They also conjectured that the maximally symmetric marginally stable pieces of the catenoids in a slab achieve the least area among all smooth minimal surfaces spanning the same  slab. 
\medskip

Later, Choe and Daniel \cite{choe2016area} proved their conjecture under the additional assumption that the intersections of the minimal surface in the slab with horizontal planes have the same orientation. In the same work, they also proposed some related questions. Specifically, they consider an immersed minimal annulus with figure-eight level curves, denoted by $\Sigma$, within a horizontal slab, denoted by $\Omega$. They inquire whether the maximally symmetric and marginally stable waist of the catenoid in  $\Omega$ still attains the minimum area when compared to $\Sigma$, [Problem 2. \cite{choe2016area}]. We give an affirmative answer to this question for annuli with vertical flux in Theorem \ref{thm: area(k=0) > C}. 
\medskip

Fix two parallel planes $P_{\pm} \subset \mathbb{R}^3$ with $P_{+} \neq P_{-}$ and denote by $\Omega$ the open slab between them. By some rotation and rigid motions we may assume that $\Omega$ is parallel to $\{ x_3 =0 \}$, so that $P_+ = \{ x_3 = h_+ \} \ \text{and}\  P_- = \{ x_3 =h_- \}$, where $h_+ >0$ and $h_- = -h_+< 0$. Let $\mathcal{A} (\Omega )$ to be the set of all minimal annuli spanning $P_+$ and $P_-$. That is, $\Sigma \in \mathcal{A} (\Omega )$ if $\Sigma \subset \Omega$ may be parameterized by a proper conformal, harmonic immersion $\bold{F} : A_{R_0,R_1} \rightarrow \mathbb{R}^3$ so that $b\Sigma := \bar{\Sigma} \backslash \Sigma \subset \partial \Omega = P_+ \cup P_-$. Here, $A_{R_0,R_1} = \{ z\in \mathbb{C};\  R_0<\vert z\vert <R_1   \}$ is an open annular domain. 
\medskip

Recall that for a given pair of connected simple closed curves $\sigma_+ \subset P_+$ and $\sigma_- \subset P_-$, a necessary condition for the existence of a minimal surface in $\Omega$ spanning $\sigma_{\pm}$ is the existence of a waist of a catenoid in $\Omega$ spanning $P_{\pm}$ so that the length of its boundary in $P_{\pm}$ is less than or equal to the length of $\sigma_+ \cup \sigma_-$, see [Theorem 4.1, \cite{bernstein2014variational} and [Theorem 4, \cite{osserman1975doubly}]. And, for a given embedded annulus $\Sigma$ in $\Omega$ spanning $P_{\pm}$, the area of $\Sigma$ is greater than or equal to the area of the maximally symmetric marginally stable piece of a catenoid in $\Omega$ spanning $P_{\pm}$, and moreover, the two areas are equal if and only if $\Sigma$ is a translate of that waist of catenoid, [Theorem 1.1, \cite{bernstein2014variational}].
\medskip
 
Let
\begin{equation}
    \nonumber
    L(r) =\vert \bold{F}(\{ \vert z\vert =r\})\vert ,
\end{equation}
for $r \in (R_0,R_1)$, 
be the length of the image of the circle $\{ \vert z\vert =r \} \subset A_{R_0,R_1}$. The convexity result of Osserman-Schiffer's, \cite{osserman1975doubly}, implies that for $\bold{F}\in \mathcal{A}(\Omega )$ one has
\begin{equation}
    \nonumber
    L''(t) \geq L(t),
\end{equation}
where $t=\log r$.
\medskip

Denote by $\mathcal{A}_E(\Omega)$ the set of maps $\bold{F} \in \mathcal{A}(\Omega)$ that are embedding and have the domain $A_{R_0,R_1}$, so $\mathcal{A}_E(\Omega)$ is the set of embedded minimal annuli in a slab considered by Bernstein-Breiner, \cite{bernstein2014variational}, (called $\mathcal{A}(\Omega)$ in their paper).
They reparameterized $\Sigma \in \mathcal{A}_E(\Omega)$ in terms of the height, $h$, of the slab with $\bold{F} \in \mathcal{A}_E(\Omega )$, such that 
\begin{equation}
    \nonumber
    \{ x_3(\bold{F}(z))=h \} =\{ \vert z \vert =r_h \}.
\end{equation}
In fact, $\bold{F}$ reparameterizes $\Sigma$ with the vertical cylinder $(h_-,h_+)\times (\frac{F_3(\Sigma)}{2\pi})\mathbb{S}^1$, where  $F_3(\Sigma)$ is the third component of the flux vector of $\Sigma$, $Flux(\Sigma)=(F_1,F_2,F_3)$. In particular, they use the length of the flux vector as a natural scale to compare different minimal annuli in the reference set $\mathcal{A}(\Omega)$. The property of the flux that makes it an appropriate scale for a minimal annulus is that, 
$
Flux(r\Sigma)=rFlux(\Sigma)
$
for $r\Sigma$ a homothetical scale of $\Sigma$ by $r>0$.\\
To that end, we fix an orientation of $\Sigma$ and let $\sigma$ be a $C^1$ closed curve in $\Sigma$ on which we also fix an orientation. According to the choices of orientation, let $\nu$ be the conormal vector field in $\Sigma$ along $\sigma$, then 
$$
Flux(\sigma)= \int_{\sigma} \nu ds,
$$
By the fact that coordinate functions on $\Sigma$ are solutions to the Laplace equation and as an application of the divergence theorem, one can check that, the flux of a curve on $\Sigma$ only depends on its homology class. We associate a vector $Flux(\Sigma)$ to the surface $\Sigma$ by choosing $\sigma$ so that $[\sigma]$ is a generator of the first homology class of $\Sigma$, $H_1(\Sigma)$, and setting $Flux(\Sigma)=Flux(\sigma)$.
The reparameterization $\bold{F}$ of $\Sigma$ with the vertical cylinder $(h_-,h_+)\times (\frac{F_3(\Sigma)}{2\pi})\mathbb{S}^1$ is not depending on the surface $\Sigma$ being an embedding, and it applies to more general surfaces in $\mathcal{A}(\Omega)$.
\medskip

Denote the length of the level curves $\Sigma_h=\{ x_3(\bold{F}(z))=h \}$ foliating $\Sigma$ in $\bold{F}$ by 
\begin{equation}
    \nonumber
    \ell (h) = L(r_h). 
\end{equation}
In fact, $\ell(h)=\mathcal{H}^1(\Sigma_h)$, Hausdorff measure of $\Sigma_h$, for $h\in (h_-,h_+)$.\\
As observed by Bernstein-Breiner,
\begin{equation}
    \nonumber
    r_h=h_0 + \frac{2\pi}{F_3} h ,
\end{equation}
Hence, 
\begin{equation}  \label{ineq: ell''-k^2m^2ell}
    \ell ''(h) \geq \Big ( \frac{2\pi}{F_3} \Big )^2 \ell (h)
\end{equation}
One has equality only for a parameterization of a catenoid. 
\medskip

More interesting, they proved that: 
\medskip

\textit{``For any $\Sigma = \bold{F}(A_{R_0,R_1})$ where $\bold{F} \in \mathcal{A}_E(\Omega)$, let $P_0=\{x_3=h_0\}\subset \bar{\Omega}$ denote the plane that satisfies 
$$
\ell(h_0)=\underset{h\in (h_-,h_+)}{\inf}  \ell(h),
$$
where $\ell(h_+)$ is defined as $\underset{h\rightarrow h_+}{\lim} \inf \ell (h)$ and likewise for $\ell(h_-)$.
And, $F_3$ denote the vertical component of $Flux (\Sigma)$. If $C$ is the vertical catenoid with $Flux(C)=(0,0,F_3)$ and symmetric with respect to the reflection through the plane $P_0$ then:
$$
Area (\Sigma) \geq Area (C\cap \Omega)
$$
with equality if and only if $\Sigma$ is a translation of $C\cap \Omega$."}
\medskip

It follows from various properties of minimal immersions that the Gauss map of an immersion $\bold{F} \in \mathcal{A}(\Omega)$ gives rise to a holomorphic map
\begin{equation}
    \nonumber
    N_{\bold{F}} : A_{R_0,R_1} \rightarrow \mathbb{C} \backslash \{0\},
\end{equation}
and there is a well-defined winding number of this map which we denote by $WN(\bold{F}) \in \mathbb{Z}$. Note that the map $N_{\bold{F}}$ takes values
in $\mathbb{C} \backslash \{0\}$ because the surface $\bold{F}(A_{R_0,R_{1}})$ intersects horizontal planes
transversally.
We let
\begin{equation}  
\nonumber
\mathcal{A}(\Omega , k) = \Big\{ \bold{F} \in \mathcal{A} (\Omega)\ :\ WN(\bold{F}) = k \Big\},
\end{equation}
the set of minimal immersions of annuli in $\Omega$ for which the degree of the Gauss map $N$ is $k$. \\
Some simple observations are as follows: \\
(1) If $\bold{F} \in \mathcal{A}(\Omega)$ is an embedding, then $\bold{F} \in \mathcal{A}(\Omega ,\pm 1)$,\\
(2) If $P_l : \mathbb{C} \backslash \{0\} \rightarrow \mathbb{C} \backslash \{0\}$ is given by $P_l(z)=z^l$ for $l\neq 0$ and $\bold{F} \in \mathcal{A}(\Omega ,k)$, then $\bold{F} \circ P_l(z) \in \mathcal{A}(\Omega ,lk)$\\
(3) In particular, $\bold{F} \in \mathcal{A}(\Omega ,k)$ if and only if $\bold{F} \circ P_{-1} \in \mathcal{A}(\Omega ,-k)$,\\
(4) For each $k\neq 0$, there is a natural family of elements of $\mathcal{A}(\Omega ,k)$ given by $\bold{F}_C \circ P_k$ where $\bold{F}_C$ is a parameterization of a piece of a (vertical) catenoid.
\medskip

For $k\neq 0$, denote by 
\begin{equation}
    \nonumber
    \mathcal{A}^*(\Omega ,k) =\{ \bold{F} \in \mathcal{A}(\Omega ,k)\ :\ \bold{F} = \bold{G} \circ P_k,\ \bold{G} \in \mathcal{A}(\Omega ,1)\},
\end{equation}
the set of the $k$-fold covers of elements with winding number 1. Then,
\begin{equation}
    \nonumber
    \mathcal{A}^*(\Omega ,\pm 1) = \mathcal{A}(\Omega ,\pm 1). 
\end{equation}
But for $k\neq 1$ this is a proper subset, which means that $\mathcal{A}^*(\Omega ,\pm k) \subset \mathcal{A}(\Omega ,\pm k)$. Notice, the covers of parameterizations of catenoids lie in these spaces. 
\medskip

We identify $\mathcal{A}(\Omega ,-k)$ with $\mathcal{A}(\Omega ,k)$, for each integer $k$. 
\medskip

Let $\Sigma = \bold{F}(A_{R_0,R_1}) $ where $\bold{F} \in \mathcal{A}(\Omega,k)$ and $k> 1$, let $P_0=\{x_3=h_0\}\subset \bar{\Omega}$ denote the plane that satisfies 
$$
\ell(h_0)=\underset{h\in (h_-,h_+)}{\inf}  \ell(h),
$$
where $\ell(h_+)$ is defined as $\underset{h\rightarrow h_+}{\lim} \inf \ell (h)$ and likewise for $\ell(h_-)$.
And, $F_3$ denotes the vertical component of $Flux (\Sigma)$. For $k\neq 0$, let $C_k\in \mathcal{A}(\Omega,k)$ to be the $k$-fold cover of the vertical catenoid with $Flux(C_k)=(0,0,F_3)$ and symmetric with respect to the reflection through the plane $P_0$. We are concerned with the following question:
\medskip

\textit{Is still true that
$$
Area (\Sigma) \geq Area (C_k)
$$
with equality if and only if $\Sigma$ is a translation of $C_k$.}\\
The answer is no when $k$ is even, as shown in Theorem \ref{thm: main result}.
\medskip

The idea of checking their winding numbers comes from the work of Osserman-Schiffer. In the proof of their convexity result, [Theorem 1., \cite{osserman1975doubly}], they concluded by two cases for the minimal annuli based on their winding numbers, that is the winding number of the curves foliating the annulus. The first case is the set of all minimal annuli with even winding number, zero is included, and the second case is the set of all minimal annuli with odd winding number. \\
Furthermore, the essence of this note is based on the observation that self-intersections of the level curves of an annulus may allow for deformations that yield new minimal annuli with smaller areas. Specifically, fix the slab $\Omega$ to be horizontal, and choose an appropriate scale to compare and evaluate the minimal annuli in $\Omega$. The scale is defined as the length of the third component of the flux vector of the given vertical annulus. It is established that for any embedded annulus in $\mathcal{A}(\Omega)$, so the winding number is necessarily one, there is always a waist of the catenoid with the same scale in $\mathcal{A}(\Omega)$ such that the area of the annulus is greater than or equal to the area of the waist of the catenoid, see \cite{bernstein2014variational}. The equality holds only if the given annulus is the characterized waist of the catenoid. However, our findings show that if we consider level curves with even winding number $k>1$ then there may exist annuli other than $k$-fold covers of catenoids in $\mathcal{A}(\Omega ,k)$, which attain a smaller area than the $k$-fold covers of the waists of catenoids with the same scale. 
\medskip

In this writing, all the surfaces are in $\mathcal{A}(\Omega)$, particularly, they span $P_-$ and $P_+$. In particular, we  set\\
$\mathcal{A} (\Omega )=\{\bold{F}:A_{R_0,R_1} \rightarrow \Omega \ :\ \bold{F}\ \text{is a proper conformal minimal immersion} \} \\
\mathcal{A}_E (\Omega )=\{\bold{F} \in  \mathcal{A} (\Omega )\ :\ \bold{F}\ \text{ is an embedding} \}   \\
\mathcal{A}(\Omega , k) = \Big\{ \Sigma \in \mathcal{A} (\Omega)\ :\ \text{degree} (N) = k \Big\} \\
\mathcal{A}_V(\Omega,k)=\{ \bold{F} \in \mathcal{A}(\Omega,k)\ ;\ Flux(\bold{F})=(0,0,F_3)\}\\
\mathcal{H}^k\  \text{denotes}\ k\text{-dimensional Hausdorff measure} \\
\Sigma_h = \Sigma \cap \{ x_3 =h;\ h_-<h<h_+ \}\ \ \text{a level curve of}\ \Sigma \\
P_0 =\{ x_3=h_0\} \subset \bar{\Omega},\ \text{so that}\ \mathcal{H}^1 (\Sigma \cap P_0)= \underset{(h_-,h_+)}{inf}\mathcal{H}^1(\Sigma_h) \\ 
 C_{\Omega}\ \text{is the maximally symmetric marginally stable waist of the catenoid spanning $\Omega$} 
$ 

\section{Preliminaries}
\subsection{Background on the space $\mathcal{A}(\Omega,k)$}
In this subsection, we collect some results on the surfaces within the spaces $\mathcal{A}^*(\Omega,k)$ and $\mathcal{A}(\Omega,k)$. These results will help us to address the question of the minimality of the area of $k$-fold covers of the waists of catenoids in $\Omega$.

\begin{theorem} \label{thm: A^* least area}
    The least area element in $\mathcal{A}(\Omega , 1)$ is $C_{\Omega}$, the maximally symmetric marginally stable piece of the catenoid spanning $\Omega$.
\end{theorem}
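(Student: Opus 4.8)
The plan is to observe that the lower area bound proved by Bernstein--Breiner \cite{bernstein2014variational} for embedded annuli is in fact a statement about every $\bold{F}\in\mathcal{A}(\Omega)$, since its proof uses only features shared by all of $\mathcal{A}(\Omega)$, and then to pass from the flux-matched catenoid to the universal minimizer $C_\Omega$. Fix $\bold{F}\in\mathcal{A}(\Omega,1)$ with $\Sigma=\bold{F}(A_{R_0,R_1})$; since $\bold{F}$ is a proper conformal minimal immersion into the slab touching both bounding planes, the vertical flux $F_3=F_3(\Sigma)$ is nonzero, and I set $m=2\pi/|F_3|$ and $a=1/m=|F_3|/(2\pi)$. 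As recalled in the introduction, $\bold{F}$ may be reparameterized over the vertical cylinder so that $x_3\circ\bold{F}$ depends only on $t=\log r\in(t_-,t_+)$ and $\Sigma_h=\{|z|=r_h\}$, a step that does not use embeddedness, and Osserman--Schiffer's convexity \cite{osserman1975doubly} gives $\ell''(h)\ge m^2\ell(h)$ on $(h_-,h_+)$ for $\ell(h)=\mathcal{H}^1(\Sigma_h)$.

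The argument then runs in four short steps. First, computing $Flux(\{|z|=r_h\})$ with conormal $\pm\bold{F}_t/|\bold{F}_t|$ and using conformality gives $\partial_t(x_3\circ\bold{F})\equiv\pm|F_3|/(2\pi)$, hence $|\bold{F}_\theta|=|\bold{F}_t|\ge|F_3|/(2\pi)$ pointwise and $\ell(h)\ge|F_3|$ for all $h$. Second, letting $h_0\in[h_-,h_+]$ realize $\inf\ell$ (with the boundary convention of the introduction), a Sturm-type comparison of $\ell$ with $|F_3|\cosh(m(\cdot-h_0))$, using $\ell''\ge m^2\ell$ together with the fact that $h_0$ is the minimum of the convex function $\ell$, gives $\ell(h)\ge|F_3|\cosh(m(h-h_0))$ on $(h_-,h_+)$. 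Third, in the conformal cylinder coordinates $Area(\Sigma)=\int_{t_-}^{t_+}\!\int_0^{2\pi}|\bold{F}_\theta|^2\,d\theta\,dt$, and since $x_3\circ\bold{F}$ is affine in $t$ (so $|dt|=m\,|dh|$), Cauchy--Schwarz in $\theta$ yields
\[
Area(\Sigma)\ \ge\ \frac{m}{2\pi}\int_{h_-}^{h_+}\ell(h)^2\,dh\ \ge\ 2\pi a\int_{h_-}^{h_+}\cosh^2\!\Big(\frac{h-h_0}{a}\Big)\,dh\ =\ Area(C_{a,h_0}\cap\Omega),
\]
where $C_{a,h_0}$ is the vertical catenoid of neck radius $a$ with neck in $\{x_3=h_0\}$; this is exactly the Bernstein--Breiner bound, now for arbitrary $\bold{F}\in\mathcal{A}(\Omega)$. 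Fourth, since $\cosh^2$ is even and convex the map $h_0\mapsto\int_{h_-}^{h_+}\cosh^2((h-h_0)/a)\,dh$ is least at the slab center $h_0=0$, so $Area(C_{a,h_0}\cap\Omega)\ge Area(C_{a,0}\cap\Omega)$, and by the characterization of $C_\Omega$ as the least-area catenoid waist spanning $\Omega$ \cite{bernstein2014variational} we get $Area(C_{a,0}\cap\Omega)\ge Area(C_\Omega)$, with equality exactly when $a$ is the neck radius of $C_\Omega$. Hence $Area(\Sigma)\ge Area(C_\Omega)$, and since $C_\Omega\in\mathcal{A}(\Omega,1)$ it is a least-area element of $\mathcal{A}(\Omega,1)$.

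For the equality clause I would trace back: $Area(\Sigma)=Area(C_\Omega)$ forces equality in Cauchy--Schwarz for a.e.\ $h$ (each level curve is traversed at constant speed), equality in the Osserman--Schiffer inequality (so $\bold{F}$ parameterizes a piece of a catenoid, necessarily of vertical axis since its horizontal sections are then round circles), and $h_0=0$, $a=a_\Omega$; equivalently, one invokes the equality case of the Bernstein--Breiner theorem to conclude $\Sigma$ is a translate of the flux-matched catenoid, which the area match identifies with $C_\Omega$ (the translation being horizontal, since $\Sigma\subset\Omega$ spans $P_\pm$).

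The step I expect to be the main obstacle is the conceptual one at the start rather than any computation: one must check carefully that the cylinder reparameterization and the inequality $\ell''\ge m^2\ell$ genuinely apply to immersed, not merely embedded, winding-number-one annuli, i.e.\ that the only properties of $\Sigma$ entering are that $x_3\circ\bold{F}$ is harmonic, free of critical points, proper, and constant on each boundary circle of $A_{R_0,R_1}$. Granting this (as recorded in the introduction), the rest is bookkeeping; the one genuinely delicate point is the Sturm comparison when $\inf\ell$ is attained on $\partial\Omega$ instead of the interior, where the two-sided Wronskian argument is replaced by a one-sided one using convexity of $\ell$, with the analogous care needed in the equality case.
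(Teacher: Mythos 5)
Your proposal is correct, and the mathematics in it is sound; but it is worth saying how it relates to what the paper actually does. The paper's proof of this theorem is a single line: it cites Theorem 1.1 of Bernstein--Breiner \cite{bernstein2014variational}. Your proposal instead reconstructs that cited proof from scratch: the flux computation giving $\ell(h)\ge|F_3|$, the Sturm comparison $\ell(h)\ge|F_3|\cosh(m(h-h_0))$ from $\ell''\ge m^2\ell$, the Cauchy--Schwarz/co-area step identifying the lower bound with the area of the flux-matched catenoid waist, and the final minimization over $h_0$ and the neck radius to reach $C_\Omega$. So the underlying argument is the same one the paper invokes, not a genuinely different route. What your version buys, and what the paper's one-line citation quietly elides, is the observation that Bernstein--Breiner's theorem is stated for \emph{embedded} annuli ($\mathcal{A}_E(\Omega)$), whereas the theorem here is about $\mathcal{A}(\Omega,1)$, which contains immersed, non-embedded elements; your opening paragraph correctly identifies that the only inputs are harmonicity of $x_3$, its constancy on the boundary circles (hence absence of interior critical points and the cylinder reparameterization), and the Osserman--Schiffer convexity, all of which hold for immersions --- a point the paper itself makes only later, in the discussion after Lemma 2.3. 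The one place to be careful, which you flag yourself, is the boundary case of the comparison when $\inf\ell$ is attained at $h_\pm$; the one-sided convexity argument you sketch (and which the paper uses explicitly in the proof of Theorem 4.3) does close it.
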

\begin{proof}
   This result directly follows from [Theorem 1.1,\cite{bernstein2014variational}].
     
\end{proof}

\begin{theorem} \label{thm: A^* least area}
    The least area element in $\mathcal{A}^*(\Omega ,k)$ is a $k$-fold cover of catenoid, for $k \neq 0$.
\end{theorem}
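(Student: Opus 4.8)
The plan is to push the entire problem onto the already-established $k=1$ case. By definition every element of $\mathcal{A}^*(\Omega,k)$ is a $k$-fold cover $\bold{F}=\bold{G}\circ P_k$ of some $\bold{G}\in\mathcal{A}(\Omega,1)$, and passing to this cover multiplies the area by $|k|$; combining this with the previous theorem (the least-area element of $\mathcal{A}(\Omega,1)$ is $C_\Omega$) gives the claim. Since $\mathcal{A}(\Omega,-k)$ is identified with $\mathcal{A}(\Omega,k)$ and $P_{-1}$ preserves area, I may assume $k>0$ throughout.

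First I would fix $\bold{F}\in\mathcal{A}^*(\Omega,k)$ with domain $A_{R_0,R_1}$ and write $\bold{F}=\bold{G}\circ P_k$, with $P_k(z)=z^k$ and $\bold{G}\in\mathcal{A}(\Omega,1)$. The map $P_k$ restricts to an unbranched $k$-sheeted holomorphic covering $A_{R_0,R_1}\to A_{R_0^{k},R_1^{k}}$ (the annulus omits $0$), and properness of $\bold{F}$ forces the domain of $\bold{G}$ to be exactly $A_{R_0^{k},R_1^{k}}$; being in $\mathcal{A}(\Omega,1)$, $\bold{G}$ is a proper conformal harmonic immersion spanning $P_\pm$. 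Using conformality, the area density of $\bold{F}$ at $z$ equals that of $\bold{G}$ at $z^{k}$ times the conformal factor $|P_k'(z)|^{2}$, so the change of variables $w=z^{k}$ together with the $k$-to-one property of $P_k$ yields $Area(\bold{F})=k\,Area(\bold{G})$. (Here $Area$ denotes the area of the immersion, counted with multiplicity; equivalently one checks on the foliating circles that $\ell_{\bold{F}}(h)=k\,\ell_{\bold{G}}(h)$ for every $h\in(h_-,h_+)$ and integrates.)

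Next I would invoke the previous theorem: since $\bold{G}\in\mathcal{A}(\Omega,1)$, we have $Area(\bold{G})\ge Area(C_\Omega)$, with equality if and only if $\bold{G}$ parameterizes a translate of the maximally symmetric marginally stable waist $C_\Omega$. Hence $Area(\bold{F})=k\,Area(\bold{G})\ge k\,Area(C_\Omega)=Area(\bold{F}_{C_\Omega}\circ P_k)$, and $\bold{F}_{C_\Omega}\circ P_k\in\mathcal{A}^*(\Omega,k)$ is a $k$-fold cover of a catenoid. For the sharpness claim I would further note that any $k$-fold cover of a catenoid waist $C$ spanning $\Omega$ has area $k\,Area(C\cap\Omega)\ge k\,Area(C_\Omega)$, with equality only for a translate of $C_\Omega$, so the least-area $k$-fold cover of a catenoid is $\bold{F}_{C_\Omega}\circ P_k$; and if $Area(\bold{F})=k\,Area(C_\Omega)$ then $Area(\bold{G})=Area(C_\Omega)$, so $\bold{G}$ — hence $\bold{F}=\bold{G}\circ P_k$ — is the $k$-fold cover of a translate of $C_\Omega$.

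The one point that must be handled carefully is the scaling identity $Area(\bold{G}\circ P_k)=k\,Area(\bold{G})$: the relevant notion of area is the multiplicity-counted area of the immersion (for the Hausdorff measure $\mathcal{H}^2$ of the image the identity fails, since $\bold{G}\circ P_k$ and $\bold{G}$ have the same image), and one must observe that properness pins down the domain of $\bold{G}$ as $A_{R_0^{k},R_1^{k}}$ rather than some larger annulus. Beyond this bookkeeping I do not expect a genuine obstacle, as all of the analytic content has already been absorbed into the $k=1$ statement.
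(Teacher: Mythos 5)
Your proposal is correct and follows essentially the same route as the paper: write $\bold{F}=\bold{G}\circ P_k$, use $Area(\bold{F})=|k|\,Area(\bold{G})$, and invoke the Bernstein--Breiner result that $C_\Omega$ minimizes area in $\mathcal{A}(\Omega,1)$. Your additional care about the multiplicity-counted area versus $\mathcal{H}^2$ of the image and about the equality case is a welcome refinement of the paper's terser argument, but not a different proof.
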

\begin{proof}
    Let $\bold{F} \in \mathcal{A}^*(\Omega ,k)$. By the definition of  $\mathcal{A}^*(\Omega ,k) $, there is $\bold{G} \in \mathcal{A}(\Omega ,1)$ so that $\bold{F} = \bold{G} \circ P_k$, where $P_k : \mathbb{C} \backslash \{ 0\} \rightarrow \mathbb{C} \backslash \{ 0\}$ is given by $P_k(z)=z^k$. It is known that, $Area (\bold{F}(A_{R_0,R_1}))=\vert k\vert Area (\bold{G}(A_{R_0,R_1}))$. The least area element in $\mathcal{A}(\Omega ,1)$ is the maximally symmetric marginally stable piece of the catenoid in $\Omega$, denoted as $\bold{F}_{C_{\Omega}}$. It immediately follows that $\bold{F}_{C_{\Omega}} \circ P_k$ represents the least area element in the set $\{ \bold{G} \circ P_k\ :\ \bold{G} \in \mathcal{A}(\Omega ,1) \}$. 
\end{proof}

\begin{lem} \label{lem: ineq. lenghth for A^*}
    For $\bold{F} \in \mathcal{A}^*(\Omega ,k)$,
    $$
    L''(t)\geq k^2 L(t),
    $$
   or equivalently, 
\begin{equation} 
\nonumber
    \ell ''(h) \geq k^2 \Big ( \frac{2\pi}{F_3} \Big )^2 \ell (h)
\end{equation}
with equality only when $\bold{F}$ parameterizes a $k$-fold cover of the catenoid.
\end{lem}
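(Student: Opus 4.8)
The plan is to exploit the factorization $\mathbf{F} = \mathbf{G}\circ P_k$ with $\mathbf{G}\in\mathcal{A}(\Omega,1)$ that comes with membership in $\mathcal{A}^*(\Omega,k)$, apply the Osserman--Schiffer convexity inequality (in the log-radius variable) to $\mathbf{G}$, and transport it through the covering map $z\mapsto z^k$. First I would relate the two length functions. Since $P_k$ carries the circle $\{|z|=r\}$ onto the circle $\{|w|=r^k\}$, covered $|k|$ times, the arclength integral for $\mathbf{F}$ over $\{|z|=r\}$ equals, after the substitution $\phi=k\theta$ and using that the loop $\phi\mapsto \mathbf{G}(r^k e^{i\phi})$ is $2\pi$-periodic, exactly $|k|$ times the arclength of $\mathbf{G}$ over $\{|w|=r^k\}$; hence $L_{\mathbf{F}}(r)=|k|\,L_{\mathbf{G}}(r^k)$. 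Passing to the logarithmic variables $t=\log r$ for $\mathbf{F}$ and $s=\log\rho$ for $\mathbf{G}$, and using $\log(r^k)=k\log r$, this becomes $L_{\mathbf{F}}(t)=|k|\,L_{\mathbf{G}}(kt)$.

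Next I would differentiate twice and invoke Osserman--Schiffer. By the chain rule $L_{\mathbf{F}}''(t)=|k|\,k^2\,L_{\mathbf{G}}''(kt)$, while $\mathbf{G}\in\mathcal{A}(\Omega,1)\subset\mathcal{A}(\Omega)$ gives $L_{\mathbf{G}}''(s)\ge L_{\mathbf{G}}(s)$ for every $s$ in the domain of $\mathbf{G}$. Evaluating at $s=kt$ and multiplying by $|k|k^2>0$ yields
$$
L_{\mathbf{F}}''(t)=|k|\,k^2\,L_{\mathbf{G}}''(kt)\ \ge\ |k|\,k^2\,L_{\mathbf{G}}(kt)=k^2\bigl(|k|\,L_{\mathbf{G}}(kt)\bigr)=k^2\,L_{\mathbf{F}}(t),
$$
which is the first asserted inequality. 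The reformulation in terms of $\ell$ then follows exactly as in the winding-number-one discussion preceding \eqref{ineq: ell''-k^2m^2ell}: the reparameterization of $\Sigma$ by the vertical cylinder makes the log-radius an affine function of the height $h$ with slope $2\pi/F_3$, so $\ell(h)=L_{\mathbf{F}}(t(h))$ differentiates to $\ell''(h)=(2\pi/F_3)^2\,L_{\mathbf{F}}''(t(h))\ge k^2(2\pi/F_3)^2\,\ell(h)$.

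For the equality clause, if $L_{\mathbf{F}}''(t)=k^2 L_{\mathbf{F}}(t)$ identically (equivalently $\ell''(h)=k^2(2\pi/F_3)^2\ell(h)$ for all $h$), then tracing back through the chain of inequalities forces $L_{\mathbf{G}}''(s)=L_{\mathbf{G}}(s)$ for all $s$, which by the equality case of the Osserman--Schiffer theorem means $\mathbf{G}$ parameterizes a piece of a vertical catenoid; hence $\mathbf{F}=\mathbf{G}\circ P_k$ is a $k$-fold cover of that catenoid. I do not expect a genuine obstacle here: the lemma is just the winding-number-one convexity inequality pulled back along $P_k$, and the only point demanding a little care is bookkeeping the multiplicity factor $|k|$ in the length function and checking that it cancels so that precisely the factor $k^2$ survives — and since every quantity in play (length, area) scales linearly with the multiplicity convention, the inequality is in any case insensitive to that choice.
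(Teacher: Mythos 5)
Your proof is correct and follows essentially the same route as the paper's: factor $\mathbf{F}=\mathbf{G}\circ P_k$, establish $L_{\mathbf{F}}(t)=|k|\,L_{\mathbf{G}}(kt)$ in the log-radius variable, and pull the winding-number-one convexity inequality back through the covering map — this is exactly the ``simple computation'' the paper leaves implicit. The only difference is in the equality clause, where you invoke the Osserman--Schiffer rigidity for $\mathbf{G}$ while the paper cites its least-area theorem for $\mathcal{A}^*(\Omega,k)$; your version is, if anything, the more direct justification.
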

\begin{proof}
    Recall that associated with $\bold{F}$ there is $\bold{G}\in \mathcal{A}(\Omega,1)$ so that $\bold{F}=\bold{G}\circ P_k$, where $P_k:\mathbb{C}\backslash \{0\} \rightarrow \mathbb{C}\backslash \{0\}$ is given by $P_k(z)=z^k$. We check that $\ell(h)=\vert \bold{F}(\vert z\vert =e^{\frac{h}{k}}) \vert $, so the result follows from a simple computation. The last statement is a direct result of Theorem \ref{thm: A^* least area}.
\end{proof}
\medskip

Let $\bold{F} \in \mathcal{A}(\Omega ,k)$, for $k=1,2, \cdots$. Note that if the surface $\Sigma = \bold{F}(A_{R_0,R_1})$ is symmetric with respect to reflection through a horizontal plane $\{ x_3=h,\ h\in (h_-,h_+)\}$, then the flux is vertical, which means that $Flux(\Sigma)=(0,0,F_3)$. In the rest of this writing, we pertain to a simplifying hypothesis that the flux of the surface $\Sigma = \bold{F}(A_{R_0,R_1})$ is vertical, and we denote it by $\mathcal{A}_V(\Omega,k)=\{ \bold{F} \in \mathcal{A}(\Omega,k)\ ;\ Flux(\bold{F})=(0,0,F_3)\}$.

\subsection{Geometric background}
Let $\Sigma$ be an annulus parameterized in $ \mathcal{A}_V(\Omega)$, and denote its minimal conformal parameterization by $\bold{X}(z)=(X_1(z),X_2(z),X_3(z)) : A_{R_0,R_1} \rightarrow \mathbb{R}^3$. Here, the functions $X_k$ are harmonic in $z$, and the functions $\phi_k := \frac{\partial X_k}{\partial x} - i\frac{\partial X_k}{\partial y}$ are analytic in $z:=x+iy$, for $k=1,2,3$. The element of arc length $ds$ on the surface $\Sigma$ is given by
\begin{equation}
    \nonumber
    ds^2 = \lambda^2 \vert dz\vert^2,\ \ \text{where}\ \lambda = \sqrt{\frac{1}{2}\sum_{i=1}^{3}\vert \phi_i \vert^2}.
\end{equation}
And,
\begin{equation}
    \nonumber
    L(r) =\vert \bold{X}(\{ \vert z\vert =r\})\vert ,
\end{equation}
is the length of the image of the circle $\{ \vert z\vert =r \} \subset A_{R_0,R_1}$. Let $\psi_i = z\phi_i$, and define $\mu =r\lambda = \sqrt{\frac{1}{2}\sum_{i=1}^{3}\vert \psi_i \vert^2}$, then we may write 
$$
L(r)= \int_{0}^{2\pi}\mu (re^{i\theta})d\theta  $$

Recall that, for $\bold{X}\in \mathcal{A}_V(\Omega )$ one has
\begin{equation}
    \nonumber
    L''(t) \geq L(t),\ \text{for}\  t=\log r
\end{equation}

For a general minimal annulus $\bold{X}(A_{R_0,R_1}) \in \mathcal{A}_V(\Omega)$, the Weierstrass representation of $\bold{X}$ in $\mathbb{R}^3$ can be expressed as
$$
\bold{X}(p)=Re \int_0^{2\pi} \Big( \frac{1}{2}(g^{-1}-g), \frac{i}{2}(g^{-1}+g),1\Big)dh
$$
where $g$ is the extended Gauss map, a meromorphic function defined on its Riemann surface, $A_{R_0,R_1}$, and $dh$ is a holomorphic 1-form on $A_{R_0,R_1}$. Note that,
$$
\bold{X}(p)=Re\int \bold{\Phi} dz
$$
where $\bold{X}(z) =(X_1(z),X_2(z),X_3(z)),\ \bold{\Phi} =(\phi_1,\phi_2,\phi_3)$, and $\phi_i =\frac{\partial X_i}{\partial x}- i\frac{\partial X_i}{\partial y}$, for $z=x+iy$. 

$\bold{X}$ is well defined on its domain if and only if no component of $\bold{\Phi}$ has a real period. That is
$$
 \text{Period}_{\gamma}(\bold{\Phi})(=\text{P}(\gamma)):= Re \oint_{\gamma} \bold{\Phi} dz=0
$$
for all closed curves $\gamma$ on $A_{R_0,R_1}$: this is the Period Problem. A good reference on Weiestrass representation is \cite{hoffman1997complete}. 
We express
\begin{equation} \label{phis}
    \phi_1 =\frac{1}{2}f(1-g^2), \  \phi_2=\frac{i}{2}f(1+g^2), \  \phi_3 =fg
\end{equation}
where $dh=fgdz$, $f$ is analytic in $A_{R_0,R_1}$, the Gauss map $g$ is meromorphic in $A_{R_0,R_1}$, $g$ has poles precisely at the zeros of $f$, and each zero of $f$ has an order equal to twice the order of the pole of $g$ at that point. Then
\begin{equation}
    \lambda = \frac{1}{2} \vert f\vert (1+\vert g\vert^2) =\frac{1}{2} \vert \phi_3 \vert (\frac{1}{\vert g\vert} +\vert g\vert )
\end{equation}
and
\begin{equation} \label{eqn: mu}
    \mu = \frac{1}{2} \vert \psi_3 \vert (\frac{1}{\vert g\vert} +\vert g\vert )
\end{equation}
Recall that $\psi_i = z\phi_i$, for $i=1,2,3$. We can compute 
\begin{equation} \label{def. psi3}
     \begin{split}
    \psi_1=\frac{1}{2}\psi_3 (\frac{1}{g}-g), \ \  \ \psi_2=\frac{i}{2}\psi_3 (\frac{1}{g}+g) \\
    \frac{\psi_3}{g} = \psi_1- i\psi_2, \  \  \  \psi_3 g = -\psi_1 -i\psi_2 .
    \end{split}
\end{equation}

\begin{lem} \label{lem: good repn}
    For any annulus $\bold{X}: A_{R_0,R_1} \rightarrow \Omega$ in $\mathcal{A}_V(\Omega)$, up to homothety, we have $dh = \frac{F_3}{2\pi} \frac{dz}{z}$.
\end{lem}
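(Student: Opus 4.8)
The plan is to reduce the statement to the radial symmetry of the height function $X_3$, which is built into the parametrization we are using, and then to read off the constant from the flux. Write $\mathbf{X} = (X_1,X_2,X_3)$, so that $X_3 = \mathrm{Re}\int dh$ is the harmonic height function on $A_{R_0,R_1}$, proper onto $(h_-,h_+)$, and $dh = \phi_3\,dz$ with $\phi_3 = \partial_x X_3 - i\,\partial_y X_3 = 2\,\partial_z X_3$.

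The first step is to observe that $X_3$ depends only on $|z|$. In the parametrization of $\Sigma$ by the vertical cylinder $(h_-,h_+)\times \tfrac{F_3}{2\pi}\mathbb{S}^1$ --- which, as noted above, is available for every element of $\mathcal{A}(\Omega)$ and not only for embeddings --- the level set $\{X_3=h\}$ is exactly the circle $\{|z|=r_h\}$, so $X_3$ is a function of $r=|z|$ alone. (If one prefers not to invoke this, the same conclusion follows from $\mathbf{X}$ being proper together with $b\Sigma\subset P_+\cup P_-$: then $X_3$ extends continuously to $\overline{A}_{R_0,R_1}$ with the constant value $h_+$ on one boundary circle and $h_-$ on the other, and uniqueness for the Dirichlet problem on the annulus forces $X_3$ to be the rotationally symmetric solution.) Since the radial harmonic functions on an annulus are precisely those of the form $A\log r + B$, we conclude $X_3 = A\log|z| + B$ with $A,B\in\mathbb{R}$.

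The second step is the computation. From $X_3 = \tfrac{A}{2}\log(z\bar z)+B$ we get $\partial_z X_3 = \tfrac{A}{2z}$, hence $\phi_3 = \tfrac{A}{z}$ and $dh = A\,\tfrac{dz}{z}$. To identify $A$, apply the flux formula to $\sigma = \{|z|=r\}$: using the conormal identity $\nu\,ds = \mathrm{Re}(z\,\mathbf{\Phi})\,d\theta$ along $\sigma$, one finds that the third component of $\mathrm{Flux}(\sigma)$ equals $\mathrm{Im}\oint_{|z|=r}\phi_3\,dz = \mathrm{Im}(2\pi i A) = 2\pi A$; since $\mathrm{Flux}(\Sigma)=(0,0,F_3)$ this gives $A = \tfrac{F_3}{2\pi}$, so $dh = \tfrac{F_3}{2\pi}\tfrac{dz}{z}$. (Alternatively one may simply quote the Bernstein--Breiner relation $X_3 = \tfrac{F_3}{2\pi}\log|z| + \mathrm{const}$ recalled in the introduction and read off $A$ immediately.) The phrase ``up to homothety'' accounts for the remaining scaling ambiguity: a rotation or a rescaling $z\mapsto cz$ of the conformal coordinate leaves $\tfrac{dz}{z}$ unchanged, and a homothety of $\Sigma$ in $\mathbb{R}^3$ multiplies both $F_3$ and $dh$ by the same factor, so the identity holds for a suitably normalized representative of the homothety class.

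The only substantive input is the radial symmetry of $X_3$ in the first step; once that is in hand the lemma is a one-line computation together with the standard flux identity. If the cylindrical reparametrization of $\mathcal{A}(\Omega)$ is taken as given --- as it is in the surrounding discussion --- there is essentially no obstacle. The honest version of that input is the conformal tameness of $\mathbf{X}$ at $\partial A_{R_0,R_1}$, i.e. continuous boundary extension with constant boundary values, which is where one must use that $\Sigma$ is minimal and trapped in the slab; I would regard establishing (or citing) this as the point to treat most carefully.
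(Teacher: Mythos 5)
Your proof is correct and follows essentially the same route as the paper, which simply cites Corollaries 2.2 and 2.4 of Bernstein--Breiner together with the observation that transversality to horizontal planes persists for immersions because the harmonic height function is constant on the boundary circles --- i.e.\ exactly the Dirichlet-uniqueness argument you give for the radial symmetry of $X_3$, followed by the same flux normalization $F_3=\mathrm{Im}\oint\phi_3\,dz=2\pi A$. The one point you rightly flag as delicate (continuous boundary extension with constant values $h_\pm$) is precisely what the cited Lemma 2.1 of \cite{bernstein2014variational} supplies, so your treatment is, if anything, more explicit than the paper's.
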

This is proved in \cite{bernstein2014variational}, see Corollaries 2.2 and 2.4., their proof is based on the fact that the embedded annuli in $\mathcal{A}_V(\Omega)$ are transverse to horizontal planes, Lemma 2.1 \cite{bernstein2014variational}. However, the transversally result holds for our immersed surfaces $\bold{X} \in \mathcal{A}_V(\Omega)$ since the height function is harmonic and constant on boundary components, so it cannot have any critical point on the interior.

\begin{rem} \label{rmk: good repn}
    \rm  For the above reparameterization of the surface $\bold{X}$, the complexified height differential is $dh=c\frac{dz}{z}$. That gives the map $\psi_3=z\phi_3=\frac{zdh}{dz}=c$ is constant.
\end{rem}

Osserman-Schiffer \cite{osserman1975doubly} observed that the length functions $L''(t)$ and $L(t)$ can be derived from the two functions $\frac{\psi_3}{g}$ and $\psi_3g$, enabling the derivation of certain relationships between $L''(t)$ and $L(t)$. More specifically, they noted that 
\begin{equation}\label{L"}
    \frac{d^2L}{dt^2} = \frac{1}{2} \int_{0}^{2\pi} \big( r^2\Delta \vert \frac{\psi_3}{g} \vert + r^2\Delta \vert \psi_3 g\vert  \big) d\theta 
\end{equation} 
and 
\begin{equation}\label{L}
    L=\frac{1}{2} \int_{0}^{2\pi} \big( \vert \frac{\psi_3}{g} \vert +  \vert \psi_3 g\vert  \big) d\theta .
\end{equation}

Furthermore, they assumed that the functions $\frac{\psi_3}{g}$ and $ \psi_3 g$ satisfy:
\begin{equation} \label{eqn: int, pp}
    \int_{0}^{2\pi} \frac{\psi_3}{g} (re^{i\theta})d\theta = 0, \  \ \int_{0}^{2\pi} \psi_3 g (re^{i\theta})d\theta = 0.
\end{equation}

In essence, solving the two integral equations above is equivalent to addressing the period problem for the parameterization of $\bold{X}$, as long as the flux is vertical. Consequently, the representation $\bold{X}$ associated with the surface $\Sigma=\bold{X}(A_{R_0,R_1})$ is considered well-defined on $A_{R_0,R_1}$ if it satisfies these two integral equations. We summarize the above:

\begin{proposition}
    The provided Weierstrass representation data for $\bold{X}$ satisfies the period problem, $P(\gamma)=0$ for any closed cycle on $A_{R_0,R_1}$, and (\ref{eqn: int, pp}) holds.  
\end{proposition}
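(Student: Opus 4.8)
The plan is to reduce the statement to a single contour computation on one circle $\{\,|z|=r\,\}$ and then separate real and imaginary parts. First I would note that $H_1(A_{R_0,R_1})\cong\Z$ is generated by the loop $\gamma_r:=\{\,|z|=r\,\}$ for any fixed $r\in(R_0,R_1)$, and that the period of the closed holomorphic $1$-form $\bold{\Phi}\,dz$ over a cycle depends only on its homology class; hence it suffices to verify $P(\gamma_r)=0$ and the two conditions in (\ref{eqn: int, pp}) for one such $r$. The vanishing $P(\gamma_r)=\mathrm{Re}\oint_{\gamma_r}\bold{\Phi}\,dz=0$ is then essentially definitional: since $\bold{X}\in\mathcal A_V(\Omega)\subset\mathcal A(\Omega)$ is a proper conformal minimal immersion of $A_{R_0,R_1}$, the map $\bold{X}(z)=\mathrm{Re}\int^{z}\bold{\Phi}\,dw$ is single-valued, so no component of $\bold{\Phi}$ carries a real period. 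This gives the first assertion.

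For the integral conditions I would parametrize $\gamma_r$ by $z=re^{i\theta}$, so $dz=iz\,d\theta$, and use $\psi_j=z\phi_j$ to get
$$
\oint_{\gamma_r}\phi_j\,dz \;=\; i\int_0^{2\pi}\psi_j(re^{i\theta})\,d\theta,\qquad j=1,2,3 .
$$
Separating real and imaginary parts gives $\mathrm{Re}\oint_{\gamma_r}\phi_j\,dz=-\,\mathrm{Im}\int_0^{2\pi}\psi_j\,d\theta$ and $\mathrm{Im}\oint_{\gamma_r}\phi_j\,dz=\mathrm{Re}\int_0^{2\pi}\psi_j\,d\theta$. The first identity together with $P(\gamma_r)=0$ shows each $\int_0^{2\pi}\psi_j\,d\theta$ is real; the second identity, together with the hypothesis $\bold{X}\in\mathcal A_V(\Omega)$ (so the flux along $\gamma_r$ is $(0,0,F_3)$, i.e. the imaginary periods of $\phi_1$ and $\phi_2$ vanish), then forces $\int_0^{2\pi}\psi_1\,d\theta=\int_0^{2\pi}\psi_2\,d\theta=0$; for $j=3$, Lemma~\ref{lem: good repn} and Remark~\ref{rmk: good repn} give $\psi_3\equiv c=F_3/2\pi\in\R$, so $\int_0^{2\pi}\psi_3\,d\theta=2\pi c$ is real and everything is consistent. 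Plugging the vanishing of the first two integrals into the algebraic identities (\ref{def. psi3}), namely $\psi_3/g=\psi_1-i\psi_2$ and $\psi_3 g=-\psi_1-i\psi_2$, yields
$$
\int_0^{2\pi}\frac{\psi_3}{g}\,d\theta=\int_0^{2\pi}\psi_1\,d\theta-i\int_0^{2\pi}\psi_2\,d\theta=0,\qquad \int_0^{2\pi}\psi_3 g\,d\theta=-\int_0^{2\pi}\psi_1\,d\theta-i\int_0^{2\pi}\psi_2\,d\theta=0,
$$
which is exactly (\ref{eqn: int, pp}).

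I do not expect a genuine obstacle here — the content is bookkeeping — and the only point requiring care is the dictionary between the complex period integral and the two geometric notions: $\mathrm{Re}\oint_{\gamma_r}\bold{\Phi}\,dz$ is the obstruction to single-valuedness (the period problem $P$), while $\mathrm{Im}\oint_{\gamma_r}\bold{\Phi}\,dz$ is the flux along $\gamma_r$, which is independent of $r$ and equals $\mathrm{Flux}(\bold{X})$. Two remarks the same computation delivers for free are worth recording: first, the argument is reversible, so (\ref{eqn: int, pp}) together with $\psi_3\equiv c\in\R$ is equivalent to ``$P(\gamma)=0$ for all $\gamma$ and the flux is vertical'', which is the asserted correspondence between the period problem and the integral conditions; second, since $\psi_3/g=zf$ and $\psi_3 g=zfg^2$ are holomorphic on $A_{R_0,R_1}$ (here $f$ and $g$ are zero- and pole-free, because $\phi_3=fg=c/z$ never vanishes there), Cauchy's theorem shows the two integrals in (\ref{eqn: int, pp}) do not depend on $r$, so the conditions are well posed.
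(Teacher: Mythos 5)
Your proposal is correct and follows exactly the route the paper intends: the paper states this proposition as a summary of the preceding discussion without writing out the computation, and your argument supplies precisely that omitted bookkeeping (homology reduction to one circle, $dz = iz\,d\theta$ converting periods of $\phi_j$ into $\int_0^{2\pi}\psi_j\,d\theta$, single-valuedness killing the real parts, vertical flux killing the imaginary parts of the $j=1,2$ integrals, and the identities (\ref{def. psi3}) giving (\ref{eqn: int, pp})). Your closing remarks on reversibility and the $r$-independence of the integrals are also correct and consistent with the paper's claim that the integral conditions are \emph{equivalent} to the period problem under the vertical-flux hypothesis.
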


Setting $F_-(z) = \frac{\psi_3}{g} $  or $F_+(z) = \psi_3 g $, the functions $F_{\pm}(z)$ are analytic on $A_{R_0,R_1}$ and satisfy:

\begin{equation} \label{eqn: int F =0}
\int_{0}^{2\pi} F_{\pm}(re^{i\theta} )d\theta =0 
\end{equation}
Assuming that $F_{\pm}$ are non-zero on $A_{R_0,R_1}$, we have two possibilities:
\begin{equation}
    \text{Case 1.}\ F_{\pm}(z)=G_{\pm}^2(z),\ \ \ \text{or}\ \ \ \text{Case 2.}\ F_{\pm}(z)=zG_{\pm}^2(z),
\end{equation}  
where $G_{\pm}(z)$ are analytic in $A_{R_0,R_1}$ and have there the Laurent expansions:
\begin{equation}
    \nonumber
    G_{\pm}(z) = \underset{n=-\infty }{\overset{\infty}{\sum}} a_n z^n
\end{equation}

Note that, 
\begin{equation}\label{eqn: int-arg=2pik}
    \int_{0}^{2\pi} \frac{\partial}{\partial \theta} \text{arg} F_{\pm}(re^{i\theta}) d\theta =2\pi k_{\pm},
\end{equation}
where $k_{\pm}$ denote the winding number of the circles that foliate the annular domain of $F_{\pm}$.
These two cases, 1 and 2, correspond to the winding number $k_{\pm}$ being even and odd for the circles that foliate the annulus $A_{R_0,R_1}$, respectively. 
Note that, by the fact that $\psi_3=c$ we can conclude that if $F_+$ satisfies case 1 (resp. case 2) then $F_-$ satisfies case 1 (resp. case 2) as well. 
\medskip

\begin{proposition}
    The winding number of the Gauss map, $g$, is equal to $k_+=-k_-$, in (\ref{eqn: int-arg=2pik}).
\end{proposition}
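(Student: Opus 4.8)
The plan is to reduce the statement to the normalization $\psi_3 = c$ with $c$ a nonzero constant, which is exactly the content of Lemma \ref{lem: good repn} and Remark \ref{rmk: good repn}. After reparameterizing $\bold{X}$ so that $dh = c\,\frac{dz}{z}$, one has $F_+ = \psi_3 g = cg$ and $F_- = \psi_3/g = c/g$ on $A_{R_0,R_1}$. Since the height function $X_3$ is harmonic and constant on the two boundary circles, it has no interior critical point, so $\phi_3 = fg$ never vanishes; hence $f$ has no zeros and $g$ has neither zeros nor poles, i.e. $g : A_{R_0,R_1} \to \mathbb{C}\backslash\{0\}$ is holomorphic. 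Consequently $\arg g(re^{i\theta})$ is a well-defined smooth function of $\theta$, and $\frac{1}{2\pi}\int_0^{2\pi}\frac{\partial}{\partial\theta}\arg g(re^{i\theta})\,d\theta$ is, by definition, the winding number of the Gauss map $N_{\bold{F}} = g$, which I will denote $WN(g)$; it is independent of $r$ because $g'/g$ is holomorphic on the annulus.

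The remaining step is a direct bookkeeping of arguments. Writing $F_+ = cg$ with $c$ a nonzero constant gives $\arg F_+(re^{i\theta}) = \arg c + \arg g(re^{i\theta})$, hence $\frac{\partial}{\partial\theta}\arg F_+(re^{i\theta}) = \frac{\partial}{\partial\theta}\arg g(re^{i\theta})$; likewise $F_- = c/g$ gives $\frac{\partial}{\partial\theta}\arg F_-(re^{i\theta}) = -\frac{\partial}{\partial\theta}\arg g(re^{i\theta})$. Integrating over $\theta\in[0,2\pi]$ and invoking (\ref{eqn: int-arg=2pik}), I get $2\pi k_+ = \int_0^{2\pi}\frac{\partial}{\partial\theta}\arg g(re^{i\theta})\,d\theta = 2\pi\,WN(g)$ and $2\pi k_- = -2\pi\,WN(g)$. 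Dividing by $2\pi$ yields $k_+ = WN(g) = -k_-$, which is exactly the assertion.

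I do not expect any real obstacle here: once Lemma \ref{lem: good repn} is available the proof is essentially one line. The only points deserving care are (i) that transversality to horizontal planes genuinely forces $g$ to omit $0$ and $\infty$, so that $\arg g$ — and hence the winding number of the Gauss map — is unambiguous and agrees with $WN(\bold{F})$; and (ii) that the integers $k_\pm$ in (\ref{eqn: int-arg=2pik}) do not depend on $r$, which holds because $F_\pm$ is analytic and non-vanishing on $A_{R_0,R_1}$, so $\frac{1}{2\pi i}\oint_{|z|=r} F_\pm'/F_\pm\,dz$ is a homotopy invariant of the loop $\{|z|=r\}$. I would record these two remarks and then present the argument-counting computation above.
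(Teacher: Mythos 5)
Your proof is correct and follows the same route as the paper: reparameterize via Lemma \ref{lem: good repn} so that $\psi_3$ is a nonzero constant (hence has winding number zero, as in Remark \ref{rmk: good repn}), and then read off $k_\pm$ from $\arg F_\pm = \arg c \pm \arg g$. The extra remarks you record on the non-vanishing of $g$ and the $r$-independence of $k_\pm$ are sound but not a departure from the paper's argument.
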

\begin{proof} We may reparametrize $\bold{X}$ as in Lemma \ref{lem: good repn}, to conclude that the winding number of the map $\psi_3$ is zero, see Remark \ref{rmk: good repn}. Then, for $F_+=\psi_3 g$ we obtain
    $$
    \int_{0}^{2\pi} \frac{\partial}{\partial \theta} \text{arg} F_+(re^{i\theta}) d\theta = \int_{0}^{2\pi} \frac{\partial}{\partial \theta} \text{arg} (\psi_3 (re^{i\theta})g(re^{i\theta})) d\theta \\ 
    = \int_{0}^{2\pi} \frac{\partial}{\partial \theta} \text{arg} (g(re^{i\theta})) d\theta , 
    $$ 
   where $g$ is the Gauss map of the surface represented by $\bold{X}$.
    For $F_-=\frac{\psi_3}{g}$, a similar argument gives the result by a negative sign.
\end{proof}
\medskip

For the analytic function $G_{\pm}(z) = \underset{n=-\infty }{\overset{\infty}{\sum}} a_n z^n$ in $A_{R_0,R_1}$ we have: \\
\begin{equation}
\nonumber
\begin{split}
& \int_{0}^{2\pi} \vert G_{\pm}(re^{i\theta} )\vert^2 d\theta = 2\pi \underset{n=-\infty }{\overset{\infty}{\sum}} \vert a_n \vert^2 r^{2n} , \\
& \int_{0}^{2\pi} \vert G'_{\pm}(re^{i\theta} )\vert^2 d\theta = 2\pi \underset{n=-\infty }{\overset{\infty}{\sum}} n^2 \vert  a_n \vert^2 r^{2n-2} , \\
& \Delta \vert G_{\pm}(z)\vert^2 =4\frac{\partial^2}{\partial z\partial \bar{z}} G_{\pm}(z) \overline{G_{\pm}(z)} = 4\vert G'_{\pm}(z)\vert^2 ,\\
& \int_{0}^{2\pi} r^2 \vert G'_{\pm} \vert^2 d\theta \geq \int_{0}^{2\pi} \vert G_{\pm}\vert^2 d\theta -2\pi \vert a_0\vert^2 ,
\end{split}
\end{equation}

and

\begin{equation} \label{ineq: int. norm.sqr.G}
    \int_{0}^{2\pi} r^2 \Delta \vert G_{\pm}\vert^2 d\theta \geq 4 \int_{0}^{2\pi} \vert G_{\pm}\vert^2 d\theta -8\pi \vert a_0\vert^2.
\end{equation}

\begin{rem}
   \rm It is easy to check that in (\ref{ineq: int. norm.sqr.G}) the equality happens, for all $r$, if and only if $a_n=0$ for all $n\neq -1,0,1$.
\end{rem}

 Throughout the remainder of this work, when referring to the functions $G_{\pm}(z)$, $F_{\pm}(z)$, $g(z)$, $f(z)$, $\phi_i(z)$, and $\psi_i(z)$, we specifically discuss the functions detailed earlier and defined on the annulus $A_{R_0,R_1}$ to parameterize the given surface.

\section{Main discussion and Results}
 In this note, we focus mainly on the annuli exhibiting an even winding number. 

\begin{lem} \label{lem: L''(t)>2L(t)}
     For surfaces in $\mathcal{A}_V(\Omega ,2k)$, the relationship  $L''(t) > 2L(t)$ holds, where $k=0,1,2,\cdots  $.
\end{lem}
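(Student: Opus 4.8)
The plan is to pass to the normalized Weierstrass gauge and run a Fourier computation against the period relation. By Lemma~\ref{lem: good repn} and Remark~\ref{rmk: good repn} we may, after a homothety (which multiplies $L$, and hence $L''-2L$, by a positive constant and preserves the winding number), assume $\psi_3\equiv c$ is constant. Since the Gauss map $g$ takes values in $\mathbb{C}\setminus\{0\}$ by transversality to horizontal planes, $g$ is a nowhere-vanishing holomorphic function on $A_{R_0,R_1}$, so $F_{\pm}=\psi_3 g^{\pm1}=c\,g^{\pm1}$ are nowhere-vanishing holomorphic and $F_+F_-=c^2$. Because $\bold{X}\in\mathcal{A}_V(\Omega,2k)$ the winding number of $g$ equals $2k$, which is even, so we are in Case~1: $F_{\pm}=G_{\pm}^2$ with $G_{\pm}$ holomorphic on $A_{R_0,R_1}$; moreover $(G_+G_-)^2=c^2$, so the holomorphic function $G_+G_-$ is constant.

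Next I would write $G_{\pm}(z)=\sum_{n\in\Z}a_n^{\pm}z^n$ and insert $F_{\pm}=G_{\pm}^2$ into (\ref{L"}) and (\ref{L}); using $\Delta|G_{\pm}|^2=4|G'_{\pm}|^2$ and the Fourier identities of Section~2 this gives
\[
L(t)=\pi\sum_{n}\bigl(|a_n^{+}|^2+|a_n^{-}|^2\bigr)r^{2n},\qquad L''(t)=4\pi\sum_{n}n^2\bigl(|a_n^{+}|^2+|a_n^{-}|^2\bigr)r^{2n},
\]
hence $L''(t)-2L(t)=2\pi\sum_{n}(2n^2-1)\bigl(|a_n^{+}|^2+|a_n^{-}|^2\bigr)r^{2n}$. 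The period relation (\ref{eqn: int F =0}) in Case~1 reads $\int_0^{2\pi}G_{\pm}^2\,d\theta=2\pi\sum_{n}a_n^{\pm}a_{-n}^{\pm}=0$, i.e.\ $(a_0^{\pm})^2=-2\sum_{n\ge1}a_n^{\pm}a_{-n}^{\pm}$, so by the triangle inequality and $2xy\le x^2r^{2n}+y^2r^{-2n}$,
\[
|a_0^{\pm}|^2\ \le\ 2\sum_{n\ge1}|a_n^{\pm}||a_{-n}^{\pm}|\ \le\ \sum_{n\ge1}\bigl(|a_n^{\pm}|^2r^{2n}+|a_{-n}^{\pm}|^2r^{-2n}\bigr)\ =\ \sum_{n\ne0}|a_n^{\pm}|^2r^{2n}.
\]
Since $2n^2-1\ge1$ for $|n|\ge1$ while $2n^2-1=-1$ at $n=0$, adding these over both signs gives $L''(t)-2L(t)\ge 2\pi\bigl(\sum_{n\ne0}(|a_n^{+}|^2+|a_n^{-}|^2)r^{2n}-|a_0^{+}|^2-|a_0^{-}|^2\bigr)\ge0$.

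It then remains to upgrade $\ge$ to strict inequality, and this is the one place where the normalization $\psi_3\equiv\text{const}$ — equivalently $G_+G_-\equiv\text{const}$ — is indispensable, since the pure Fourier estimate gives only $\ge$ and equality at a single radius is otherwise arithmetically consistent. Suppose $L''(t_0)=2L(t_0)$ with $r_0=e^{t_0}\in(R_0,R_1)$. Then the nonnegative lower bound above vanishes and coincides with $L''(t_0)-2L(t_0)$, so all of the preceding inequalities are equalities at $r_0$; for each sign $\epsilon$ this forces (i) $a_n^{\epsilon}=0$ for all $|n|\ge2$, so $G_{\epsilon}(z)=a_{-1}^{\epsilon}z^{-1}+a_0^{\epsilon}+a_1^{\epsilon}z$, and (ii) $|a_0^{\epsilon}|^2=|a_1^{\epsilon}|^2r_0^2+|a_{-1}^{\epsilon}|^2r_0^{-2}$. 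But then $F_{\pm}=G_{\pm}^2$ are Laurent polynomials, hence $g=c^{-1}F_+$ and $g^{-1}=c^{-1}F_-$ are both Laurent polynomials, which forces $g$ (a unit of $\mathbb{C}[z,z^{-1}]$) to be a monomial, and therefore each $G_{\epsilon}$ a monomial $a_j^{\epsilon}z^j$ with $j\in\{-1,0,1\}$. If $j=0$ the period relation $(a_0^{\epsilon})^2=-2a_1^{\epsilon}a_{-1}^{\epsilon}$ forces $a_0^{\epsilon}=0$, contradicting $G_{\epsilon}\not\equiv0$; if $j=\pm1$ then $a_0^{\epsilon}=0$, whereupon (ii) forces $a_{\pm1}^{\epsilon}=0$, so $G_{\epsilon}\equiv0$, again contradicting that $F_{\epsilon}=G_{\epsilon}^2$ is nowhere vanishing. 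Hence $L''(t)>2L(t)$ for every $t$. I expect this last step — excluding these borderline configurations using the vertical-flux normalization — to be the only genuine obstacle; everything before it is routine bookkeeping with Fourier coefficients.
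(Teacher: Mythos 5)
Your proof is correct, and up to the last step it follows the same route as the paper's: reduce to Case~1 so that $F_{\pm}=G_{\pm}^2$, use the period relation (\ref{eqn: int, pp}) to kill the constant Laurent coefficient of $F_{\pm}$, bound $|a_0^{\pm}|^2\le\sum_{n\neq0}|a_n^{\pm}|^2r^{2n}$ by Cauchy--Schwarz, and combine with (\ref{ineq: int. norm.sqr.G}); your weights $2n^2-1$ are exactly that combination written out termwise. The genuine difference is in how strictness is obtained. The paper argues that equality would force $\sum_n|a_n|^2r^{2n}=2|a_0|^2$ on all of $(R_0,R_1)$ and excludes this by a Liouville-type continuation argument; as written, that only rules out equality holding at every radius simultaneously, and, as you correctly observe, equality at a single radius is arithmetically consistent for one of $G_+$, $G_-$ taken in isolation (e.g.\ $G=-z^{-1}+\sqrt{2}+z$ satisfies the period relation and achieves equality at $r_0=1$). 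Your argument closes that gap: invoking the normalization $\psi_3\equiv c$ of Lemma \ref{lem: good repn} and Remark \ref{rmk: good repn}, equality at a single $r_0$ forces $F_{\pm}$ to be Laurent polynomials, hence $g=c^{-1}F_+$ a unit of $\mathbb{C}[z,z^{-1}]$, i.e.\ a monomial, which is then incompatible with the period relation together with your condition (ii). What this buys is the pointwise strict inequality $L''(t)>2L(t)$ for every $t$, which is the form of the statement actually used later (e.g.\ in the ODE comparison of Theorem \ref{thm: area(k=0) > C}), so your variant of the final step is not merely different but arguably the more complete justification of the lemma as stated.
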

\begin{proof}
   Let $\bold{X} \in \mathcal{A}_V(\Omega, 2k)$, for $k=0,1,2,\cdots$. In this case, $F_{\pm}(z)=G_{\pm}^2(z)$. Using (\ref{eqn: int, pp}), it is evident that the constant term in the Laurent expansion of $F_{\pm}(z)$, denoted as  $a_0^2 + 2\underset{n=1 }{\overset{\infty}{\sum}} a_n a_{-n}$, must be equal to zero:
\begin{equation}
    \nonumber
    a_0^2 + 2 \underset{n=1 }{\overset{\infty}{\sum}} a_n a_{-n} =0.
\end{equation}
Compute
\begin{equation}
    \nonumber
    \begin{split}
    \vert a_0 \vert^2 & = 2 \Big\vert \underset{n=1 }{\overset{\infty}{\sum}} a_n r^n a_{-n} r^{-n} \Big\vert \leq \underset{n=1 }{\overset{\infty}{\sum}} (\vert a_n \vert^2 r^{2n} + \vert a_{-n} \vert^2 r^{-2n} ) \\
    & = \frac{1}{2\pi} \int_{0}^{2\pi} \vert G_{\pm}(re^{i\theta} ) \vert^2 d\theta - \vert a_0\vert^2
    \end{split}
\end{equation}
Hence,
\begin{equation} \label{ineq: a_0&int.norm.sqrG}
    4\pi \vert a_0\vert^2 \leq \int_{0}^{2\pi} \vert G_{\pm}\vert^2 d\theta 
\end{equation}
By substituting this into inequality (\ref{ineq: int. norm.sqr.G}), and considering $\vert F_{\pm}\vert =\vert G_{\pm}\vert^2$, we obtain:
\begin{equation}  \label{ineq: intg. norm F}
\int_{0}^{2\pi} r^2 \Delta \vert F_{\pm}\vert d\theta \geq 2\int_{0}^{2\pi} \vert F_{\pm}\vert d\theta  
\end{equation}
Equality holds, for any $r\in (R_0,R_1)$, if and only if  
$$
 4\pi \vert a_0\vert^2 = \int_{0}^{2\pi} \vert G_{\pm}\vert^2 d\theta ,
$$
where $\int_{0}^{2\pi} \vert G_{\pm}\vert^2 d\theta = 2\pi \underset{n=-\infty }{\overset{\infty}{\sum}} \vert a_n \vert^2 r^{2n}$. Hence, $\underset{n=-\infty }{\overset{\infty}{\sum}} \vert a_n \vert^2 r^{2n}= 2 \vert a_0\vert^2$, for $r\in (R_0,R_1)$. Note that, $\underset{n<0}{\sum} \vert a_n \vert^2 z^{2n}$ defines a holomorphic function on $\mathbb{C}\backslash D_{R_0}(0)$ while $\underset{n\geq 0 }{\sum} \vert a_n \vert^2 z^{2n}$ defines a holomorphic function on the disk $D_{R_1}(0)$. Notice that the function $\underset{n\geq 0 }{\sum} \vert a_n \vert^2 z^{2n}$ coincide with $2 \vert a_0\vert^2 -\underset{n<0}{\sum} \vert a_n \vert^2 z^{2n}$ on the annulus $A_{R_0,R_1}$, so we may 
consider the following function:
$$
f(z)=   
\begin{cases}
&\underset{n\geq 0 }{\sum} \vert a_n \vert^2 z^{2n} \hspace{1.6cm} \text{on}\  D_{R_1} \\
&2 \vert a_0\vert^2 -\underset{n<0}{\sum} \vert a_n \vert^2 z^{2n}\ \   \text{on}\  \mathbb{C}\backslash D_{R_0} 
\end{cases} 
$$
$f(z)$ is entire and bounded, thus it is constant. It follows that both $\underset{n<0}{\sum} \vert a_n \vert^2 r^{2n}$ and $\underset{n\geq 0 }{\sum} \vert a_n \vert^2 r^{2n}$ are constant on the interval $(R_0,R_1)$. However, this is not possible for non-constant $G_{\pm}(z)$, when $R_0\neq R_1$. 
We conclude that, $\int_{0}^{2\pi} r^2 \Delta \vert F_{\pm}\vert d\theta > 2\int_{0}^{2\pi} \vert F_{\pm}\vert d\theta $, which is equivalent to $L''(t) > 2L(t)$.
\end{proof}

\begin{rem}
    \rm  If $\Sigma \in \mathcal{A}_V^*
(\Omega ,k)$ for $k$ being an even number, then according to Theorem \ref{thm: A^* least area}, the least area surface is a $k$-fold cover of a catenoid. Furthermore, as per Lemma \ref{lem: ineq. lenghth for A^*}, $L''(t) \geq k^2 L(t)$ with equality for all $t$ only happens for the surface be a $k$-fold cover of the catenoid. Specifically, for all surfaces parameterized in $\mathcal{A}_V^*
(\Omega ,2k),\ k\neq 0$, we have $L''(t) \geq 4L(t)$ with equality, on the domain $A_{R_0,R_1}$, only for 2-fold covers of a waist of a catenoid.
\end{rem}

Set $F_-=\frac{\psi_3}{g}$ and $F_+=\psi_3 g$, and subsequently $G_-^2(z)=F_-(z)$ or $zG_-^2(z)=F_-(z)$, and $G_+^2(z)=F_+(z)$ or $zG_+^2(z)=F_+(z)$ depending on the winding number of the level curves being even or odd, respectively.

\begin{lem} \label{lem: A* implies int G =0}
    If $\bold{X} \in \mathcal{A}_V^*(\Omega ,2k)$ for $k=1,2,3,\cdots$, then 
    $$
    \int_0^{2\pi} G_-(z)d\theta =0,\ \ \text{and}\  \int_0^{2\pi} G_+(z)d\theta =0.
    $$ 
\end{lem}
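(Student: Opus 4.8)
The plan is to turn the $2k$-fold covering structure of $\bold{X}$ into a rotational symmetry of $F_\pm$, push that symmetry down to $G_\pm$, and read off that the Laurent expansion of $G_\pm$ has no constant term --- which is exactly the assertion, since for $G_\pm(z)=\sum_n a_n z^n$ one has $\int_0^{2\pi}G_\pm(re^{i\theta})\,d\theta=2\pi a_0$.

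First I would set up the symmetry. By definition of $\mathcal{A}_V^*(\Omega,2k)$ there is $\bold{G}\in\mathcal{A}(\Omega,1)$ with $\bold{X}=\bold{G}\circ P_{2k}$, and after a scaling reparameterization (which preserves this factorization) we may assume $\bold{X}$ is normalized as in Lemma~\ref{lem: good repn}, so that $\psi_3\equiv c$ is a nonzero constant by Remark~\ref{rmk: good repn}. Set $\zeta=e^{\pi i/k}$, a primitive $2k$-th root of unity. From $\bold{X}(z)=\bold{G}(z^{2k})$ the Gauss map pulls back through $P_{2k}$, so $g(\zeta z)=g(z)$ on $A_{R_0,R_1}$, while $\psi_3$ is $\zeta$-invariant because it is constant; hence $F_\pm=\psi_3\,g^{\pm1}$ satisfies $F_\pm(\zeta z)=F_\pm(z)$.

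Next I would transfer the symmetry to $G_\pm$. Since the winding number $2k$ is even we are in Case~1, so $F_\pm=G_\pm^2$ with $G_\pm$ analytic and nowhere zero on $A_{R_0,R_1}$. Then $G_\pm(\zeta z)^2=G_\pm(z)^2$, and since $G_\pm$ has no zeros the quotient $G_\pm(\zeta z)/G_\pm(z)$ is a locally constant $\{\pm1\}$-valued map on the connected annulus, hence a fixed $\varepsilon_\pm\in\{\pm1\}$. Comparing Laurent coefficients in $G_\pm(\zeta z)=\varepsilon_\pm G_\pm(z)$ shows that the coefficient of $z^n$ in $G_\pm$ vanishes unless $\zeta^n=\varepsilon_\pm$, i.e.\ unless $n\in 2k\Z$ (when $\varepsilon_\pm=1$) or $n\in k+2k\Z$ (when $\varepsilon_\pm=-1$). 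To exclude the first alternative I would use that, by the earlier identification of the winding number of $g$ with $k_+=-k_-$, the winding number of $F_\pm=G_\pm^2$ is $\pm2k$, so that of $G_\pm$ is $\pm k$; but if $\varepsilon_\pm=1$ then $G_\pm(z)=P_\pm(z^{2k})$ for an analytic zero-free $P_\pm$ on $A_{R_0^{2k},R_1^{2k}}$, and the change of variable $w=z^{2k}$ in the argument-principle integral makes the winding number of $G_\pm$ a multiple of $2k$, impossible since $k\geq1$. Hence $\varepsilon_\pm=-1$, the Laurent series of $G_\pm$ is supported on $k+2k\Z$, a set not containing $0$; therefore $a_0=0$ and $\int_0^{2\pi}G_\pm(re^{i\theta})\,d\theta=0$.

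I expect the main obstacle to be the bookkeeping in the symmetry-transfer step: confirming that $g$ and $\psi_3$ really are $\zeta$-invariant for the parameterization used, that the normalization of Lemma~\ref{lem: good repn} is compatible with $\bold{X}=\bold{G}\circ P_{2k}$, that $G_\pm$ is single-valued on the annulus, and that the winding number of $G_\pm$ equals $2k$ times that of $P_\pm$. Once those points are settled, what remains is only a routine comparison of Laurent coefficients.
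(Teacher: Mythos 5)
Your argument is correct, but it reaches the conclusion by a genuinely different mechanism than the paper's proof. The paper works directly with the base surface $\widetilde{\Sigma}=\bold{G}(A_{R_0,R_1})$: since $\bold{G}$ has winding number $1$ (odd), its data fall into Case 2, $\widetilde{F}_{\pm}(z)=z\widetilde{G}_{\pm}^2(z)$, so the covering relation $F_{\pm}(z)=2k\,\widetilde{F}_{\pm}(z^{2k})$ yields the explicit formula $G_{\pm}(z)=\sqrt{2k}\,z^{k}\widetilde{G}_{\pm}(z^{2k})$, whose Laurent support $k(2\Z+1)$ visibly omits $0$. You never identify $G_{\pm}$ explicitly: you extract from the covering the deck symmetry $F_{\pm}(\zeta z)=F_{\pm}(z)$ with $\zeta=e^{\pi i/k}$, lift it through the square root to $G_{\pm}(\zeta z)=\varepsilon_{\pm}G_{\pm}(z)$ using zero-freeness and connectedness, and then rule out $\varepsilon_{\pm}=+1$ by a winding-number parity argument (the winding number of $G_{\pm}$ is $\pm k$, which is not a multiple of $2k$). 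Both proofs land on the same Laurent support $k+2k\Z$, which excludes $0$, giving $a_0=0$. Your route costs an extra case analysis, but it does not require knowing which of Case 1/Case 2 the base surface falls into --- the parity of the winding number of $G_{\pm}$ does that work --- and it makes transparent that the vanishing of $a_0$ is forced by the induced $\Z/2k$ symmetry alone. One small remark: the appeal to the normalization of Lemma \ref{lem: good repn} to make $\psi_3$ constant is not strictly needed for the $\zeta$-invariance of $F_{\pm}$, since $\psi_3(z)=2k\,\psi_3^{\bold{G}}(z^{2k})$ is automatically $\zeta$-invariant for any parameterization of the cover; but invoking it does no harm, and the normalization is indeed compatible with the factorization $\bold{X}=\bold{G}\circ P_{2k}$ as you note.
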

\begin{proof}
    Suppose that $\bold{X} \in \mathcal{A}_V^*(\Omega ,2k)$ parameterizes a $2k$-cover, $\Sigma$, of the surface $\widetilde{\Sigma}$ with the corresponding parameterization $\widetilde{\bold{X}} \in \mathcal{A}_V(\Omega ,1)$. The functions $G_{\pm}(z), F_{\pm}(z)$, $\widetilde{G_{\pm}}(z)$, and $ \widetilde{F_{\pm}}(z)$ correspond to the surfaces $\Sigma$ and $\widetilde{\Sigma}$. Recall that
    $$
    F_{\pm}(z)=G_{\pm}^2(z)  
    $$
    while, 
    $$
    \widetilde{F_{\pm}}(z)=z \widetilde{G_{\pm}}^2(z),
    $$
    where $F_-(z)=\frac{\psi_3}{g},\ F_+(z)=\psi_3g$, and similar equations for $\widetilde{F_{\pm}}(z)$. 
     \medskip
     
     Explicitly, 
     $
     g(z)=\widetilde{g}(z^{2k})$, and, $F_{\pm}(z)=2k\widetilde{F}_{\pm}(z^{2k})=2kz^{2k}\widetilde{G}^2_{\pm}(z^{2k})$.\\
   Let $\underset{n=-\infty}{\overset{\infty}{\sum}}  a_n z^n$ and $\underset{n=-\infty}{\overset{\infty}{\sum}}  \widetilde{a}_n z^n$ be the Laurent expansions of $G_{\pm}(z)$ and $\widetilde{G_{\pm}}(z)$, respectively.\\
   Recall that 
    $$
    \int_0^{2\pi}  G_{\pm}(z) d\theta =0
    $$
    if and only if $\vert a_0\vert^2 =0$. \\
    Note that, $G_{\pm}(z)=\sqrt{2k}z^k\widetilde{G}_{\pm}(z^{2k})=\sqrt{2k} \underset{n=-\infty}{\overset{\infty}{\sum}}  \widetilde{a}_n z^{(2n+1)k}$, and thus $a_0=0$ in the Laurent series of $G_{\pm}(z)$. 
    \end{proof}

    \begin{rem}
        \rm
    Recall that, by equation (\ref{L}) we have for $L(t)$ (and similarly for $\widetilde{L}(t)$):
    $$
    L(t)=\frac{1}{2}\int_0^{2\pi} (\vert F_-\vert +\vert F_+\vert )d\theta ,
    $$
   Moreover,
    $$
    \int_0^{2\pi}\vert F_{\pm}\vert d\theta =2k \int_0^{2\pi} \vert \widetilde{F_{\pm}} \vert d\theta
    $$
    That is equivalent to
    $$
    \int_0^{2\pi}\vert G_{\pm}\vert^2 d\theta =2k\rho \int_0^{2\pi} \vert \widetilde{G_{\pm}} \vert^2 d\theta ,
    $$
    for any $\rho \in (R_0,R_1)$, where the surfaces $\Sigma$ and $\widetilde{\Sigma}$ are two conformal minimal parameterizations of the annulus $A_{R_0,R_1}$. 
    Thus, $\Sigma$ is a $2k$-cover of $\widetilde{\Sigma}$, implying that $ L(t)=2k\widetilde{L}(t)$. 
 \end{rem}

\begin{lem}
    For a surface $\bold{X} \in \mathcal{A}_V(\Omega ,2)$ if $\int_0^{2\pi} G_-(z)d\theta = c_- $ and $ \int_0^{2\pi} G_+(z)d\theta = c_+$, then  
    $$
    L''(t) \geq 4L(t) -  \frac{1}{\pi}(\vert c_-\vert^2 +\vert c_+\vert^2 )
    $$
    In particular, if $c_-=c_+=0$ then
    $$
    L''(t) \geq 4L(t)
    $$
    and equality holds for all $t$ if and only if $\bold{X}$ is a 2-fold cover of a waist of a catenoid
\end{lem}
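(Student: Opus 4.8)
The plan is to re-run the argument in the proof of Lemma~\ref{lem: L''(t)>2L(t)} for winding number $2$, but to retain the constant Laurent coefficients of $G_{\pm}$ instead of discarding them: those coefficients are precisely what produces the correction term $\tfrac1\pi(|c_-|^2+|c_+|^2)$. First I would reparameterize $\bold{X}$ as in Lemma~\ref{lem: good repn}, so that $dh=\tfrac{F_3}{2\pi}\tfrac{dz}{z}$ and hence $\psi_3=z\phi_3\equiv c$ is a nonzero constant (Remark~\ref{rmk: good repn}); this is harmless, since both sides of the claimed inequality scale the same way under a homothety. Since $fg\,dz=dh=c\,\tfrac{dz}{z}$ is zero-free, neither $f$ nor $g$ has zeros or poles on $A_{R_0,R_1}$, so $F_-=\psi_3/g$ and $F_+=\psi_3 g$ are zero-free; because $WN(\bold{X})=2$ is even, we are in Case~1, that is, $F_{\pm}=G_{\pm}^2$ with $G_{\pm}(z)=\sum_n a_n^{\pm}z^n$ analytic and zero-free on $A_{R_0,R_1}$. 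Note that $c_{\pm}=\int_0^{2\pi}G_{\pm}(re^{i\theta})\,d\theta=2\pi a_0^{\pm}$, so $4\pi|a_0^{\pm}|^2=\tfrac1\pi|c_{\pm}|^2$.

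For the inequality, substituting $|F_{\pm}|=|G_{\pm}|^2$ into (\ref{L}) and (\ref{L"}) gives
\[
L=\tfrac12\int_0^{2\pi}\big(|G_-|^2+|G_+|^2\big)\,d\theta,\qquad L''=\tfrac12\int_0^{2\pi}\big(r^2\Delta|G_-|^2+r^2\Delta|G_+|^2\big)\,d\theta .
\]
Applying the estimate (\ref{ineq: int. norm.sqr.G}) to each of $G_-$ and $G_+$, then summing and halving, one gets
\[
L''\ \ge\ 2\int_0^{2\pi}\big(|G_-|^2+|G_+|^2\big)\,d\theta-4\pi\big(|a_0^-|^2+|a_0^+|^2\big)=4L-\tfrac1\pi\big(|c_-|^2+|c_+|^2\big),
\]
which is the first assertion; taking $c_-=c_+=0$ gives $L''\ge 4L$.

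For the rigidity statement, assume $c_-=c_+=0$ (so $a_0^-=a_0^+=0$) and $L''=4L$ for every $t$. Then equality must hold in (\ref{ineq: int. norm.sqr.G}) for both $G_{\pm}$ and all $r$, so by the remark following (\ref{ineq: int. norm.sqr.G}) one has $a_n^{\pm}=0$ for all $|n|\ge 2$; hence $G_{\pm}(z)=a_1^{\pm}z+a_{-1}^{\pm}z^{-1}$. The period relation (\ref{eqn: int, pp}) forces the constant Laurent coefficient of $F_{\pm}=G_{\pm}^2$ to vanish, i.e.\ $a_1^{\pm}a_{-1}^{\pm}=0$. Since $g$ has winding number $WN(\bold{X})=2$ and $F_+=G_+^2$ is a nonzero constant multiple of $g$, the function $G_+$ has winding number $1$; this excludes $a_1^+=0$, so $a_{-1}^+=0$, $G_+=a_1^+z$, and $g=F_+/\psi_3$ is a nonzero constant times $z^2$. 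Together with $dh=\tfrac{F_3}{2\pi}\tfrac{dz}{z}$, this is precisely the Weierstrass data of a $2$-fold cover of a (vertical) catenoid. Conversely, if $\bold{X}$ is a $2$-fold cover of a waist of a catenoid, then $g$ is a constant multiple of $z^{\pm 2}$, so each $G_{\pm}$ is supported on the single exponent $\pm 1$; hence $a_0^{\pm}=0$ (so $c_{\pm}=0$) and equality holds in (\ref{ineq: int. norm.sqr.G}) for all $r$, which gives $L''=4L$.

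The inequality itself is little more than bookkeeping on top of the proof of Lemma~\ref{lem: L''(t)>2L(t)}: one simply refrains from discarding the $|a_0^{\pm}|^2$ terms when estimating $L''$. The genuine difficulty is the equality case, where the equality conditions of (\ref{ineq: int. norm.sqr.G}) (which trim $G_{\pm}$ to at most two Laurent terms), the period relations (\ref{eqn: int, pp}), and the normalization $\psi_3\equiv c$ from Lemma~\ref{lem: good repn} all have to be combined to force $g$ to be a constant times $z^2$, and hence to identify $\bold{X}$ as a $2$-fold cover of the catenoid; that is where I expect the main obstacle to lie.
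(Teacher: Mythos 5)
Your derivation of the inequality is exactly the paper's: identify $c_{\pm}=2\pi a_0^{\pm}$, apply (\ref{ineq: int. norm.sqr.G}) to each of $G_{\pm}$, and sum using (\ref{L"}) and (\ref{L}); the preliminary normalization via Lemma \ref{lem: good repn} is harmless and not even needed for this part. Where you genuinely diverge is the equality case. The paper disposes of it in one line by citing Lemma \ref{lem: ineq. lenghth for A^*}, but that lemma is stated only for surfaces already known to lie in $\mathcal{A}^*(\Omega,2)$, i.e.\ known a priori to be $2$-fold covers, so the citation does not by itself rule out an element of $\mathcal{A}_V(\Omega,2)\setminus\mathcal{A}^*(\Omega,2)$ with $c_\pm=0$ attaining equality. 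Your argument closes exactly this gap: equality in the summed estimate forces equality in (\ref{ineq: int. norm.sqr.G}) for each of $G_\pm$ separately (since each deficit is nonnegative when $a_0^{\pm}=0$), the remark after (\ref{ineq: int. norm.sqr.G}) trims $G_\pm$ to $a_1^{\pm}z+a_{-1}^{\pm}z^{-1}$, the period condition (\ref{eqn: int, pp}) kills the cross term $a_1^{\pm}a_{-1}^{\pm}$, and the winding-number count (via $\psi_3\equiv c$ and $F_+=G_+^2$) selects the monomial $G_+=a_1^+z$, hence $g=\mathrm{const}\cdot z^2$ with $dh=\tfrac{F_3}{2\pi}\tfrac{dz}{z}$ --- the data of a $2$-fold catenoid cover. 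The converse direction you give is routine. So your proof is correct, matches the paper on the main estimate, and is strictly more complete on the rigidity statement; the extra work you flagged as the ``main obstacle'' is precisely the part the paper elides.
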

\begin{proof}
    Note that $\int_0^{2\pi} G_{\pm}(z) d\theta =2\pi a_0$, where $a_0$ is the constant term in the Laurent expansion of $G_{\pm}$. By applying the inequality (\ref{ineq: int. norm.sqr.G}), we derive the following inequality:
    $$
     \int_{0}^{2\pi} r^2 \Delta \vert G_{\pm} \vert^2 d\theta \geq 4 \int_{0}^{2\pi} \vert G_{\pm} \vert^2 d\theta - \frac{2}{\pi} \vert c_{\pm} \vert^2 
    $$  
    Recall the equations
    $$
    L''(t)=\frac{1}{2}\int_0^{2\pi}r^2 (\Delta \vert G_-\vert^2 + \Delta \vert G_+\vert^2)
    $$
    and,
    $$
    L(t) = \frac{1}{2}\int_0^{2\pi} ( \vert G_-\vert^2 +  \vert G_+\vert^2)
    $$
    This can be expressed as
    $$
    L''(t) \geq 4L(t) - \frac{1}{\pi}(\vert c_-\vert^2 +\vert c_+\vert^2 )
    $$
    In the second part, by Lemma \ref{lem: ineq. lenghth for A^*} the equality holds only for 2-fold covers of waist of catenoids.
\end{proof}

\begin{theorem} \label{thm: existence for L''<4L}
    There exist minimal surfaces in $\mathcal{A}_V(\Omega,2)\backslash \mathcal{A}^*(\Omega,2)$ for which
    $$
    L''(t)<4L(t),\ \ \forall t
    $$
\end{theorem}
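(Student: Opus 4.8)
The plan is to write down explicit Weierstrass data and check three things in turn: that the resulting immersion $\bold{X}$ lies in $\mathcal{A}_V(\Omega,2)$, that it is not a two--fold cover, and that $L''<4L$ everywhere. I would normalize as in Lemma~\ref{lem: good repn}: on an annulus $A_{1,R}$ set $dh=\frac{F_3}{2\pi}\frac{dz}{z}$ with $F_3=2\pi c$, $c>0$, so $\psi_3\equiv c$ and $X_3=c\log|z|$ ranges over $(0,c\log R)$; taking $R=e^{2h_+/c}$ and translating vertically places the image in $\Omega$. For the Gauss map I would take
$$
g(z)=\tfrac1c\,G_+(z)^2,\qquad G_+(z)=\frac{(z-\delta_1)(z-\delta_2)}{z},
$$
where $|\delta_1|,|\delta_2|<1$ are chosen with $\delta_1^2+4\delta_1\delta_2+\delta_2^2=0$, say $\delta_2=\rho\in(0,1)$ and $\delta_1=(\sqrt3-2)\rho$. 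Then $g$ is analytic and nowhere zero on $A_{1,R}$ and winds twice about the origin, so $WN(\bold{X})=2$; from $F_\pm=\psi_3g^{\pm1}$ one gets $F_+=G_+^2$ and $F_-=(c/G_+)^2=:G_-^2$ with $G_-=c/G_+$ analytic on $A_{1,R}$. The condition on $\delta_1,\delta_2$ says precisely that the constant Laurent coefficient of $g$ (equivalently of $F_+$) vanishes, and that of $F_-=c^2z^2/\big((z-\delta_1)^2(z-\delta_2)^2\big)$ vanishes automatically since its Laurent expansion on $A_{1,R}$ involves only powers $z^n$ with $n\le-2$. Thus (\ref{eqn: int, pp}) holds, the period problem is solved, $F_1=F_2=0$, and since $\lambda=\frac{|\psi_3|}{2|z|}(|g|^{-1}+|g|)>0$ the map $\bold{X}$ is a proper conformal minimal immersion of $A_{1,R}$ into $\Omega$ with $Flux(\bold{X})=(0,0,F_3)$; i.e.\ $\bold{X}\in\mathcal{A}_V(\Omega,2)$. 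Finally $[z^0]G_+=-(\delta_1+\delta_2)\neq0$, so by the contrapositive of Lemma~\ref{lem: A* implies int G =0} (and because within $\mathcal{A}_V(\Omega,2)$ being in $\mathcal{A}^*(\Omega,2)$ is the same as being in $\mathcal{A}_V^*(\Omega,2)$, flux doubling under a two--fold cover) one gets $\bold{X}\notin\mathcal{A}^*(\Omega,2)$.

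For the estimate I would write $G_\pm(z)=\sum_na^\pm_nz^n$ and combine (\ref{L"}), (\ref{L}), the Fourier--Laurent identities recorded just before (\ref{ineq: int. norm.sqr.G}), and $|F_\pm|=|G_\pm|^2$, to get the exact identity
$$
L''(t)-4L(t)=4\pi\sum_n(n^2-1)\big(|a^-_n|^2+|a^+_n|^2\big)r^{2n},\qquad t=\log r .
$$
Here $G_+(z)=z-(\delta_1+\delta_2)+\delta_1\delta_2z^{-1}$ contributes only $-4\pi|\delta_1+\delta_2|^2$ (the $n=\pm1$ terms drop), while $G_-(z)=c\,z^{-1}\sum_{m\ge0}s_mz^{-m}$, $s_m=\sum_{j+k=m}\delta_1^j\delta_2^k$, contributes $4\pi|c|^2\sum_{m\ge1}(m^2+2m)|s_m|^2r^{-2m-2}$, which is positive and strictly decreasing in $r$. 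Since $|s_m|\le(m+1)\rho^m$, the number $P:=\sum_{m\ge1}(m^2+2m)|s_m|^2$ is finite, and the positive part is at most $|c|^2P$ on $A_{1,R}$. So I would fix $\rho\in(0,1)$, then pick $c>0$ with $|c|^2P<|\delta_1+\delta_2|^2=(4-2\sqrt3)\rho^2$, and set $R=e^{2h_+/c}$; this forces $L''(t)-4L(t)<0$ for all admissible $t$.

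The routine parts — that the Weierstrass data defines a proper immersion and the Laurent expansions of $G_\pm$ above — I would dispatch in a few lines. The one real difficulty is making the strict inequality hold \emph{uniformly} in $t$: the favourable term $|\delta_1+\delta_2|^2$ and the unfavourable term $|c|^2P$ both scale like $\rho^2$, so the key observation is that $P$ does not involve $c$, hence fixing $\rho$ and then shrinking $c$ wins the competition; after that the monotonicity in $r$ of the $G_-$--contribution reduces the whole-annulus bound to one scalar inequality. A small but essential point is that the chosen root of $\delta_1^2+4\delta_1\delta_2+\delta_2^2=0$ has $\delta_1+\delta_2\neq0$ (true, since $\delta_1/\delta_2=\sqrt3-2$), which is exactly what keeps $[z^0]G_+\neq0$ and thus, via Lemma~\ref{lem: A* implies int G =0}, keeps $\bold{X}$ out of $\mathcal{A}^*(\Omega,2)$.
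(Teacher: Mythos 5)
Your construction is correct, and it follows the paper's overall strategy --- perturb the Weierstrass data of a two-fold catenoid cover so that the constant Laurent coefficient of $G_+$ becomes nonzero, verify the period conditions (\ref{eqn: int, pp}) and the winding number, exclude membership in $\mathcal{A}^*(\Omega,2)$ via Lemma \ref{lem: A* implies int G =0}, and read off $L''-4L$ from the Laurent coefficients --- but your explicit data differ from the paper's in a way that genuinely changes both the geometric and the analytic work. The paper takes \emph{both} $G_-$ and $G_+$ to be three-term Laurent polynomials ($G_-=c_1z+\epsilon_1+\delta_1z^{-1}$, $G_+=c_2z^{-1}+\epsilon_2+\delta_2z$), so equality holds in (\ref{ineq: int. norm.sqr.G}) for each of them and one obtains the exact identity $L''=4L-8\pi\vert\hat\epsilon\vert^2$ with no estimate needed; the price is that $\psi_3=G_-G_+$ is nonconstant, the level sets of $x_3$ are not the round circles $\vert z\vert=r$, and the paper must pass to a thin sub-slab $\Omega'$ and reparameterize via the Riemann mapping theorem to control the winding number and recover a round annular domain. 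You instead impose the normalization of Lemma \ref{lem: good repn} from the start, so $\psi_3\equiv c$, $x_3=c\log\vert z\vert$, the level sets are exactly the circles $\vert z\vert=r$, the winding number is $2$ on all of $A_{1,R}$ by the argument principle applied to $G_+^2$, and the surface spans the \emph{given} slab $\Omega$ exactly once $R=e^{2h_+/c}$; this eliminates the thin-slab and Riemann-mapping step entirely. The price you pay is that $G_-=c/G_+$ has an infinite Laurent tail, so its contribution to $L''-4L$ is positive and must be dominated; your key observation --- that $P$ depends only on $\rho$ while the unfavourable term carries the factor $\vert c\vert^2$, so fixing $\rho$ and then shrinking $c$ (with $R$ adjusted accordingly, which is harmless since $r^{-2m-2}<1$ uniformly on $r>1$) wins the competition --- closes that gap correctly, and the identity $L''(t)-4L(t)=4\pi\sum_n(n^2-1)(\vert a_n^-\vert^2+\vert a_n^+\vert^2)r^{2n}$ you use is the same Laurent bookkeeping underlying (\ref{ineq: int. norm.sqr.G}). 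Both arguments are sound; yours is cleaner on the geometric side (properness, winding number, spanning the original $\Omega$) and slightly heavier on the analytic side, where the paper gets an identity for free.
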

\begin{proof}
    We identify examples of such surfaces by introducing perturbations to the extended Gauss map of a 2-fold cover of a catenoid.\\
 Define the slab $\Omega$ as outlined in the introduction, where $\Omega$ represents the open space between two parallel planes, denoted as $P_{\pm} \subset \mathbb{R}^3$, with $P_{\pm} =\{ x_3 =h_{\pm} \}$. Here, $h_-=-h_+$ with $h_+$ being a positive number. The surface $\bold{F} \in \mathcal{A}_V(\Omega ,2)$ is parameterized by the proper conformal, harmonic immersion $\bold{F}:A_{R_0,R_1} \rightarrow \mathbb{R}^3$, where $A_{R_0,R_1}=\{ z\in \mathbb{C} ;\  R_0< \vert z\vert <R_1 \}$ is an open annular domain. Moreover, $\bold{F}(A_{R_0,R_1}) \subset \Omega$ and $\bar{\Sigma} \backslash \Sigma \subset \partial \Omega$, where $\partial \Omega =  P_+ \cup P_- $. \\
Let $C_2$ be the 2-fold cover of a waist of catenoid parameterized by $\bold{F} \in \mathcal{A}_V(\Omega , 2)$, and assume the following Weierstrass data corresponding to $C_2$:
     $$
     g(z)=\frac{d_1}{z^2} \ \text{and}\ f(z)=d_2z,
     $$
     the corresponding representation is
     $$
     \begin{cases}
         \phi_1 = \frac{d_2z}{2} (1-\frac{d_1^2}{z^4})=\frac{d_2}{2z^3}(z^4-d_1^2) \\
         \phi_2 = \frac{id_2z}{2} (1+\frac{d_1^2}{z^4})=\frac{id_2}{2z^3}(z^4+d_1^2)  \\
         \phi_3 = \frac{d_1d_2}{z}
     \end{cases}
     $$
moreover, $F_-(z)=\frac{\psi_3}{g}=c_1z^2$ and $F_+(z)=\psi_3 g=c_2z^{-2}$.
Consider a perturbation of the Gauss map of $C_2$ given by
    $$
    g(z)=\frac{c_2 z^{-1}+\epsilon_2 +\delta_2 z}{c_1z+\epsilon_1 + \delta_1 z^{-1}},
    $$
   where $\epsilon_1, \delta_1, \epsilon_2, \delta_2$ are sufficiently small numbers, and they satisfy $\epsilon_1^2+2\delta_1c_1=0$ and $\epsilon_2^2+2\delta_2 c_2=0$. 
   By choosing these numbers sufficiently small, we can adjust the annular domain $A_{R_0,R_1}$ so that within $A_{R_0,R_1}$, the Gauss map remains free of poles and zeros. Accordingly,
   $$  f(z)=\frac{c_1^2z^2+\delta_1^2z^{-2}+2\epsilon_1 \delta_1 z^{-1}+2c_1\epsilon_1z}{z}
    $$
We compute $\psi_3 =z\phi_3$, where $\phi_3=fg$,
$$
\psi_3 (z) = (c_1z+\epsilon_1 + \delta_1 z^{-1}) (c_2 z^{-1}+\epsilon_2 +\delta_2 z)
$$
Given $g(z)$ set
    $$
    G_-(z)= c_1z+\epsilon_1 + \delta_1 z^{-1},\ \ \  G_+(z)=c_2 z^{-1}+\epsilon_2 +\delta_2 z 
    $$
    The computation yields:
    $$
    F_-(z)=G_-^2(z)=c_1^2z^2+\delta_1^2z^{-2}+2\epsilon_1 \delta_1 z^{-1}+2c_1\epsilon_1z,
    $$
   
    $$
    F_+(z)=G_+^2(z)=c_2^2z^{-2}+\delta_2^2z^2+2\epsilon_2 \delta_2 z+2c_2\epsilon_2z^{-1}
    $$
    Note that the mappings $F_-$ and $F_+$ satisfy
    $$
    \int_0^{2\pi} F_- d\theta =0,\ \int_0^{2\pi} F_+ d\theta =0
    $$
    This ensures that the new data results in a well-defined map on the adjusted domain $A_{R_0,R_1}$, and it also implies that the new surface maintains a vertical flux.
   \medskip
   
    The mappings $g(z)$ and $f(z)$ provide the parameters for the following Weierstrass representation:
    $$
    \bold{X}(p) = Re \int_{z,z_0} (\phi_1,\phi_2,\phi_3)dz
    $$
    where,
    $$
    \begin{cases}
        \phi_1=\frac{f}{2}(1-g^2) \\
        \phi_2=\frac{if}{2} (1+g^2) \\
        \phi_3 =fg
    \end{cases}
    $$
    Recall that the mapping $\bold{X}(p)$ is well-defined on $A_{R_0,R_1}$. 
    \medskip
    
    Let $\widetilde{\Sigma} = \bold{X} (A_{R_0,R_1})  \subset \mathbb{R}^3$
    represent the minimal surface in $\mathbb{R}^3$ parameterized by $\bold{X}$. Now, let $\Omega'$ be an open slab delimited by two parallel planes $P'_{\pm}$, where $P'_{\pm}=\{ x_3=h'_{\pm} \}$. Choose $0<h'_+ < h_+$ as a small number and $h'_-=-h'_+$, consequently, $\Omega' \subset \Omega$. For $h'_+$  sufficiently small, the associated Gauss map of $\bold{X}$ possesses a well-defined winding number, indicative of the number of times the circles foliating the surface $\widetilde{\Sigma}$ wind around it.
    We can confirm that this winding number remains two by directly evaluating the integral $\int_0^{2\pi} \frac{\partial}{\partial \theta} arg F(re^{i\theta}) d\theta$. Here, $\epsilon_j$ and $\delta_j$ are chosen to be sufficiently small, ensuring that the integral can be approximated by $\int_0^{2\pi} \frac{\partial}{\partial \theta} arg (c_j^2r^2e^{2i\theta}) d\theta$, for $j=1,2$. \\
  Define
$$
\Sigma = \widetilde{\Sigma} \cap \Omega'
$$
Subsequently,
$$
\bold{X}^{-1}(\Sigma) = A
$$
for some topological annulus $A$. By the Riemann mapping theorem there exists a conformal diffeomorphism $T$ from some planar annulus $A_{R_2,R_3}$ onto $A$, for certain positive values $R_2<R_3$, and $A_{R_2,R_3} \subset A_{R_0,R_1}$. Consider the map $\bold{X}\circ T: A_{R_2,R_3}\rightarrow \mathcal{A}_V(\Omega',2)$, and note that for a sufficiently thin slab $\Omega'$, and subsequently, a thin annulus $A$, the map $T$ is close to identity. Hence, $\bold{X}\circ T :=\hat{\bold{X}}$ has the Weierstrass data that approximates that of $\bold{X}$, thus $\hat{\bold{X}}=\bold{X}\circ T(A_{R_2,R_3})\in \mathcal{A}_V(\Omega',2)$.  
Observing that, 
$$
\int_0^{2\pi} \hat{G}_-(z)d\theta =2\pi \hat{\epsilon}_1 \neq 0,
$$
$$
\int_0^{2\pi} \hat{G}_+(z)d\theta =2\pi \hat{\epsilon}_2 \neq 0,
$$
we can conclude from Lemma \ref{lem: A* implies int G =0} that $\hat{\bold{X}} \in \mathcal{A}_V(\Omega',2) \backslash \mathcal{A}^*(\Omega',2)$.\\
Finally, we verify that
$$
\begin{cases}
    \int_0^{2\pi}r^2\Delta \vert \hat{G}_-\vert^2 d\theta =4\int_0^{2\pi} \vert \hat{G}_-\vert^2 d\theta -8\pi \vert \hat{\epsilon}_1 \vert^2 \\
    \int_0^{2\pi}r^2\Delta \vert \hat{G}_+\vert^2 d\theta =4\int_0^{2\pi} \vert \hat{G}_+\vert^2 d\theta -8\pi \vert \hat{\epsilon}_2 \vert^2
\end{cases}
$$
 Consequently,
 $$
 L''(t)=4L(t)-8\pi \vert \hat{\epsilon} \vert^2
 $$
 Given that $\epsilon$ is non-zero, it follows that 
 $$ L''(t)<4L(t),\ \forall t.$$
\end{proof}

\begin{proposition} \label{prop: symm.surface with L''<L}
    For the surface described in Theorem \ref{thm: existence for L''<4L}, we may choose the constants $c_1,\epsilon_1, \delta_1, c_2, \epsilon_2, \delta_2$ in such a way that the surface exhibits symmetry with respect to reflection through the horizontal plane $\{ x_3=0\} \subset \Omega$. 
\end{proposition}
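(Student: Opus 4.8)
The plan is to exploit the explicit Weierstrass data constructed in Theorem \ref{thm: existence for L''<4L} and impose symmetry constraints on the coefficients $c_1,\epsilon_1,\delta_1,c_2,\epsilon_2,\delta_2$. Recall that reflection through $\{x_3=0\}$ of a minimal surface corresponds, at the level of the Weierstrass representation, to a symmetry of the extended Gauss map $g$ and the height differential $dh$. Concretely, if a conformal reparameterization $z\mapsto \tau(z)$ of the annular domain $A_{R_0,R_1}$ (for us, an inversion $\tau(z)=\rho^2/z$ or $\tau(z)=\rho^2/\bar z$ for a suitable radius $\rho=\sqrt{R_0R_1}$) induces the isometry $(x_1,x_2,x_3)\mapsto(x_1,x_2,-x_3)$, then one needs $g\circ\tau = 1/\bar g$ (so that the normal gets reflected) together with the compatible transformation of $\phi_3 = fg\,dz$. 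Since by Lemma \ref{lem: good repn} and Remark \ref{rmk: good repn} we may take $\psi_3 = z\phi_3$ constant, the height differential already has the required symmetry under $z\mapsto \rho^2/z$ up to a constant; the remaining condition is the Gauss-map relation.

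First I would normalize: by a homothety and rotation of the domain we may assume $R_0R_1=1$, so the candidate reflection in the domain is $z\mapsto 1/\bar z$ (together with $z\mapsto 1/z$ for the orientation-reversing/preserving bookkeeping). Next I would write out what $g\circ\tau = 1/\bar g$ means for
$$
g(z)=\frac{c_2 z^{-1}+\epsilon_2 +\delta_2 z}{c_1z+\epsilon_1 + \delta_1 z^{-1}}.
$$
Substituting $z\mapsto 1/z$ turns the numerator into $c_2 z+\epsilon_2+\delta_2 z^{-1}$ and the denominator into $c_1 z^{-1}+\epsilon_1+\delta_1 z$, i.e. it swaps the roles of $G_-$ and $G_+$ up to relabeling coefficients. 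Comparing this with $1/g$ (which also swaps numerator and denominator) forces algebraic relations of the form $c_1=\bar c_2$, $\epsilon_1=\bar\epsilon_2$, $\delta_1=\bar\delta_2$, possibly after multiplying $g$ by a unimodular constant to absorb a phase. One then checks these relations are compatible with the constraints already present in Theorem \ref{thm: existence for L''<4L}, namely $\epsilon_1^2+2\delta_1 c_1=0$ and $\epsilon_2^2+2\delta_2 c_2=0$: indeed the second becomes the complex conjugate of the first, so it is automatically satisfied and imposes no new restriction. Finally, I would verify that with these choices the period conditions $\int_0^{2\pi}F_\pm\,d\theta=0$ still hold (they are unchanged, as they only involve vanishing of the constant Laurent coefficient, which the symmetry preserves) and that $\hat\epsilon_j\neq 0$ can still be arranged, so the surface remains in $\mathcal{A}_V(\Omega',2)\setminus\mathcal{A}^*(\Omega',2)$ while now being symmetric across $\{x_3=0\}$.

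The main obstacle I anticipate is bookkeeping the phases correctly: the reflection symmetry is only determined up to precomposition with a rotation of the domain and up to a unimodular constant in $g$, so the "correct" constraint is not literally $c_1=\bar c_2$ but $c_1 = e^{i\alpha}\bar c_2$ etc. for some fixed $\alpha$, and one must check that a single choice of $\alpha$ makes all three coefficient relations consistent simultaneously and is compatible with $\epsilon_1^2 = -2\delta_1 c_1$. A clean way to sidestep this is to simply start from a manifestly symmetric ansatz — choose the $C_2$ base surface to be the (symmetric) $2$-fold cover of the maximally symmetric waist, for which $d_1,d_2$ can be taken real, and then perturb symmetrically by taking $\epsilon_2=\bar\epsilon_1$, $\delta_2=\bar\delta_1$, $c_2=\bar c_1$ from the outset — and verify a posteriori that the resulting surface is invariant under $z\mapsto 1/\bar z$ composed with $x_3\mapsto -x_3$, using that both $dh=\psi_3\,dz/z$ and the one-form $\Phi\,dz$ transform correctly. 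This reduces the proposition to a short symmetry check on the explicit formulas rather than solving for the constraints, and the estimate $L''(t)=4L(t)-8\pi|\hat\epsilon|^2<4L(t)$ carries over verbatim.
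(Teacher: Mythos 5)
Your proposal is correct and follows essentially the same route as the paper: the paper likewise uses the antiholomorphic involution $\psi(z)=1/\bar z$ of the (suitably normalized) annulus, imposes $g\circ\psi = 1/\overline{g}$ and $\psi^{*}dh = -\overline{dh}$, reduces these to $G_+(1/\bar z)=\overline{G_-(z)}$ and $G_-(1/\bar z)=\overline{G_+(z)}$, and solves them by setting $c_2=\bar c_1$, $\epsilon_2=\bar\epsilon_1$, $\delta_2=\bar\delta_1$, then verifies directly that $\psi^{*}\mathbf{X}=R_3\circ\mathbf{X}$. Your extra consistency checks --- that $\epsilon_2^2+2\delta_2 c_2=0$ becomes the conjugate of $\epsilon_1^2+2\delta_1 c_1=0$ and that the period conditions are preserved --- are correct and left implicit in the paper.
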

\begin{proof}
    We assume that the surface $\Sigma$ is represented by the mapping 
    $$
    \bold{X}(z)=\int_{p_0}^p\Big( \frac{1}{2}(g^{-1}-g),\frac{i}{2}(g^{-1}+g),1\Big)dh
    $$
    where $\bold{X}(z):A_{R_0,R_1}\rightarrow \mathbb{R}^3$ is well-defined on the annular domain $A_{R_0,R_1}$ for $0<R_0<1$ and $1<R_1$, and  $dh=fgdz$, for $f$ and $g$ as provided in Theorem \ref{thm: existence for L''<4L}.  \\
    Hence, $\bold{X}(A_{R_0,R_1})=\Sigma$ as constructed in Theorem \ref{thm: existence for L''<4L} lies in $ \mathcal{A}_V(\Omega, 2)$, for some open slab $\Omega$ satisfying the properties defined for $\Omega'$. \\
   Define 
    $$
    \psi(z):A_{R_0,R_1}\rightarrow A_{R_0,R_1}, 
    $$
     by
    $$
     \psi(z)=\frac{1}{\bar{z}}
    $$
    Note that $\psi$ is identity on $\vert z\vert =1$.\\
    Consider the pullback of the maps $g$ and $dh$ by the map $\psi$
    $$
    \psi^*g=g\circ \psi =g(\frac{1}{\bar{z}})
    $$
  and
  $$
  \psi^*dh=dh\circ \psi = dh(\frac{1}{\bar{z}})
  $$
  We may choose the constants $c_1,\epsilon_1, \delta_1, c_2, \epsilon_2, \delta_2$  in the definitions of the maps $G_-(z)$ and $G_+(z)$ such that the following conditions are satisfied:
  $$
  \psi^*g=g(\frac{1}{\bar{z}})=\frac{1}{\overline{g(z)}}
  $$
  and
  $$
  \psi^*dh=dh(\frac{1}{\bar{z}})=-\overline{dh}
  $$
  Recall that $f(z)=\frac{G_-^2(z)}{z}$ and $g(z)=\frac{G_+(z)}{G_-(z)}$, and verify the following relations:\\
  
  $1.\ g(\frac{1}{\bar{z}})=\frac{G_+(\frac{1}{\bar{z}})}{G_-(\frac{1}{\bar{z}})} $ \\
  
 $
 2.\  \frac{1}{\overline{g(z)}}=\frac{\overline{G_-(z)}}{\overline{G_+(z)}}
 $ \\
 
  $
  3.\ dh(\frac{1}{\bar{z}})= f(\frac{1}{\bar{z}})g(\frac{1}{\bar{z}}) d(\frac{1}{\bar{z}}) $\\
  
  $
 4.\  -\overline{dh}=-\overline{f(z)}\ \overline{g(z)} d\bar{z}
  $\\
  
 Following a concise computation to meet the specified conditions, we obtain the following system of equations:
   \begin{equation}
      \nonumber
      \begin{cases}
           G_+(\frac{1}{\bar{z}})=\overline{G_-(z)} \\
           G_-(\frac{1}{\bar{z}})=\overline{G_+(z)}
      \end{cases}
   \end{equation}
  That results in
  
  \begin{equation}
      \nonumber \begin{cases} c_2\bar{z}+\epsilon_2+\delta_2\bar{z}^{-1}=\bar{c_1}\bar{z}+\bar{\epsilon_1}+\bar{\delta_1}\bar{z}^{-1}\\ c_1\bar{z}^{-1}+\epsilon_1+\delta_1\bar{z}=\bar{c_2}\bar{z}^{-1}+\bar{\epsilon_2}+\bar{\delta_2}\bar{z} 
     \end{cases}
   \end{equation}
   Now set 
   $$
   c_2=\bar{c_1},\ \epsilon_2=\bar{\epsilon_1},\ \delta_2=\bar{\delta_1}
   $$
  
  Compute  
  \begin{equation}
  \nonumber
  \begin{split}
  \psi^*\bold{X} &= \bold{X}\circ \psi\\
  &= Re \int_{\psi(p_0)}^{\psi(p)} \Big( \frac{1}{2}(g^{-1}\circ \psi -g\circ \psi ),\frac{i}{2}(g^{-1}\circ \psi +g\circ \psi ),1    \Big)dh\circ \psi \\
  &=Re \int_{p_0}^p \Big( \frac{1}{2}(\bar{g}-\bar{g}^{-1}),\frac{i}{2}(\bar{g}+\bar{g}^{-1}),1    \Big)(-\overline{dh})\\
  &=Re \overline{\int_{p_0}^p \Big( \frac{1}{2}(g^{-1}-g),\frac{i}{2}(g^{-1}+g),-1    \Big)dh}\\
  &=Re \int_{p_0}^p \Big( \frac{1}{2}(g^{-1}-g),\frac{i}{2}(g^{-1}+g),-1    \Big)dh \\
  &=R_3\circ \bold{X},
   \end{split}
  \end{equation}
  where $R_3$ is the reflection with respect to the plane $\{x_3=0\}$.
\end{proof}

\begin{proposition} \label{prop: length comparison in A(omega,2)}
     Assume $\bold{X} \in \mathcal{A}_V(\Omega ,2)$ is the surface $\Sigma$ described in Theorem \ref{thm: existence for L''<4L}, meeting the conditions outlined in Proposition \ref{prop: symm.surface with L''<L}, with Flux$(\Sigma)=(0,0,F_3)$. Let $C \in \mathcal{A}_V(\Omega ,2)$ be the waist of a 2-fold cover of the catenoid, symmetric with respect to reflection through the horizontal plane $\{ x_3=0\}$, and possessing the flux vector $(0,0,F_3(\Sigma))$. Here, the slab $\Omega$ is as outlined in the introduction, for sufficiently small positive number $h_+$, and $h_-=-h_+$. Then for any non-zero $h\in [h_-,h_+]$: 
     $$\ell_{\Sigma}(h)<\ell_{C}(h),$$
     where $\ell_{\Sigma}(h)=\mathcal{H}^1(\Sigma_h)$ (resp. $\ell_C(h)=\mathcal{H}^1(C_h)$), for $h\in (h_-,h_+)$, \\ $\ell_{\Sigma}(h_+)=\lim \inf_{t\nearrow h_+}  \mathcal{H}^1(\Sigma_h)$ (resp.  $\ell_{C}(h_+)$), and $\ell_{\Sigma}(h_-)=\lim \inf_{t\searrow h_-}  \mathcal{H}^1(\Sigma_h)$ (resp.  $\ell_{C}(h_-)$). Here $\Sigma_h$ and $C_h$ are level curves of $\Sigma$ and $C$, respectively.
\end{proposition}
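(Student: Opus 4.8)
The plan is to compare the two length functions $\ell_\Sigma(h)$ and $\ell_C(h)$ directly as functions of height, by feeding the strict second-order inequality of Theorem \ref{thm: existence for L''<4L} into a Wronskian comparison against the exact ODE satisfied by the catenoid cover, after first matching the two waists. Since $\Sigma$ (by Proposition \ref{prop: symm.surface with L''<L}) and $C$ are invariant under reflection through $\{x_3=0\}$, both $\ell_\Sigma$ and $\ell_C$ are even in $h$, so $\ell_\Sigma'(0)=\ell_C'(0)=0$. Because $C$ is the $2$-fold cover of the vertical catenoid with $\mathrm{Flux}(C)=(0,0,F_3)$, the underlying catenoid has vertical flux $F_3/2$ and neck radius $F_3/4\pi$, so $\ell_C(h)=F_3\cosh(\frac{4\pi}{F_3}h)$; writing $m:=\frac{2\pi}{F_3}$, this says $\ell_C''=4m^2\ell_C$ with $\ell_C(0)=F_3$, and $u(h):=\ell_C(h)/F_3=\cosh(2mh)$ solves $u''=4m^2u$, $u>0$, $u(0)=1$, $u'(0)=0$.

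Next I would check that the waists have equal length, $\ell_\Sigma(0)=F_3=\ell_C(0)$. By the symmetry, the zero level set $\{X_3=0\}$ is the circle $\{|z|=1\}$ fixed by $\psi(z)=1/\bar z$, and $\Sigma\cap\{x_3=0\}=\bold X(\{|z|=1\})$, so $\ell_\Sigma(0)=\frac12\int_0^{2\pi}\big(|G_-(e^{i\theta})|^2+|G_+(e^{i\theta})|^2\big)\,d\theta$; by Parseval, using the relations $c_2=\bar c_1,\ \epsilon_2=\bar\epsilon_1,\ \delta_2=\bar\delta_1$ from Proposition \ref{prop: symm.surface with L''<L}, this equals $2\pi(|c_1|^2+|\epsilon_1|^2+|\delta_1|^2)$. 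On the other hand $\psi_3=z\phi_3=G_-G_+$ has constant Laurent coefficient $c_1c_2+\epsilon_1\epsilon_2+\delta_1\delta_2=|c_1|^2+|\epsilon_1|^2+|\delta_1|^2$, and the vertical flux equals $2\pi$ times this quantity; hence $F_3=2\pi(|c_1|^2+|\epsilon_1|^2+|\delta_1|^2)=\ell_\Sigma(0)$.

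For the comparison, recall from Theorem \ref{thm: existence for L''<4L} that $\ell_\Sigma''(h)<4m^2\ell_\Sigma(h)$ for all $h\in(h_-,h_+)$. Set $W:=u'\ell_\Sigma-u\ell_\Sigma'$. Then $W(0)=-\ell_\Sigma'(0)=0$ and $W'=u''\ell_\Sigma-u\ell_\Sigma''=u\big(4m^2\ell_\Sigma-\ell_\Sigma''\big)>0$ on $(0,h_+)$, so $W>0$ there; equivalently $(\ell_\Sigma/u)'=-W/u^2<0$ on $(0,h_+)$. Therefore $\ell_\Sigma(h)/u(h)<\ell_\Sigma(0)/u(0)=F_3$ for $0<h<h_+$, i.e. $\ell_\Sigma(h)<F_3\cosh(2mh)=\ell_C(h)$; by evenness the same holds for $h_-<h<0$, and since $\ell_\Sigma/u$ is monotone and bounded on $(0,h_+)$ its limit at $h_+$ is still $<F_3$, so $\ell_\Sigma(h_+)=\liminf_{t\nearrow h_+}\ell_\Sigma(t)<u(h_+)F_3=\ell_C(h_+)$, and likewise at $h_-$. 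This gives $\ell_\Sigma(h)<\ell_C(h)$ for every nonzero $h\in[h_-,h_+]$.

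The main obstacle is justifying the inequality $\ell_\Sigma''<4m^2\ell_\Sigma$ in the height variable, since Theorem \ref{thm: existence for L''<4L} is stated for $L(t)=|\bold F(\{|z|=e^t\})|$ in the parameterization built there, whose coordinate circles are not the level sets of $x_3$ (there $\psi_3=G_-G_+$ is non-constant). The remedy is to reparameterize via Lemma \ref{lem: good repn}, so that the level sets become circular and $h$ becomes affine in $\log|z|$, and then to verify that the strict inequality $L''<4L$ survives this change of coordinates; for the small perturbations of Theorem \ref{thm: existence for L''<4L} and a sufficiently thin slab $\Omega$ this persistence follows from the same Fourier/Laurent estimate used in its proof. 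One should also verify the orientation conventions in the flux identity of the waist computation and the identification $\{X_3=0\}=\{|z|=1\}$, which rests on $x_3$ having a single regular zero level set isotopic to the core circle.
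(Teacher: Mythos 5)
Your proposal is correct and follows essentially the same route as the paper's proof: both feed the strict inequality $\ell_\Sigma''(h) < 4\frac{(2\pi)^2}{F_3^2}\ell_\Sigma(h)$ from Theorem \ref{thm: existence for L''<4L} and the exact equation $\ell_C'' = 4\frac{(2\pi)^2}{F_3^2}\ell_C$ into a second-order comparison anchored at $h=0$, where the reflection symmetry gives $\ell_\Sigma'(0)=\ell_C'(0)=0$. The one sub-step you do genuinely differently is the matching of the waists: you verify $\ell_\Sigma(0)=F_3=\ell_C(0)$ by Parseval on $\{|z|=1\}$ together with the residue computation of the vertical flux of $\psi_3=G_-G_+$, whereas the paper argues geometrically that $F_3=\big|\int_{\Sigma_h}\mathbf{e}_3\cdot\nu\,ds\big|\le\ell_\Sigma(h)$ with equality exactly when $\Sigma_h$ is a geodesic, and that the symmetry forces $\Sigma_0$ and $C_0$ to be geodesics. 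The two arguments are interchangeable; yours has the virtue of being an explicit check on the constructed example, while the paper's applies to any symmetric annulus. Your Wronskian formulation is also a slightly more careful version of the paper's ``comparison of ordinary differential equations'' applied to $\ell_C-\ell_\Sigma$. Finally, the obstacle you flag at the end --- that Theorem \ref{thm: existence for L''<4L} controls $L(t)$ along coordinate circles while the proposition concerns level curves of $x_3$, and that these differ here because $\psi_3$ is non-constant in that construction --- is a real point which the paper passes over silently; your proposed remedy via Lemma \ref{lem: good repn} is the right one, though it remains a sketch in your write-up just as it is implicit in the paper.
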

\begin{proof}
     As shown by Bernstein-Breiner, the length function $\ell_{\Sigma}(h)$ depends smoothly on $h$, for $h\in (h_- ,h_+ )$. Defining $\ell_{\Sigma}(h_+)$ as the limit as $t$ approaches $h_+$ and $\ell_{\Sigma}(h_-)$ as the limit as $t$ approaches $h_-$, we treat $\ell_{\Sigma}(h)$ as a function defined on the interval $[h_-, h_+]$. Moreover, through the small slab $\Omega$, the length function $\ell_{\Sigma}$ is both continuous and finite over the interval $[h_-,h_+]$. Analogous properties apply to the function $\ell_C$ over the same interval. \\
     According to the results of Theorem \ref{thm: existence for L''<4L}, we have
    $$
    \ell_{\Sigma}''(h)< 4\frac{(2\pi)^2}{F_3^2}\ell_{\Sigma}(h),
    $$
    whereas, by Lemma \ref{lem: ineq. lenghth for A^*}
    $$
    \ell_C''(h)= 4\frac{(2\pi)^2}{F_3^2}\ell_C(h)
    $$
    where Flux$(\Sigma )=(0,0,F_3)$ and Flux$(C)=(0,0,F_3)$. \\
    Note that for any $h\in [h_-,h_+]$
    $$
    F_3=\Big\vert \int_{\Sigma_h}\bold{e}_3.\nu ds \Big\vert \leq \int_{\Sigma_h} ds=\ell_{\Sigma}(h)
    $$
    with equality if and only if $\Sigma_h$ is a geodesic in $\Sigma$. For the end points $h_{\pm}$ we take the limit of the corresponding integrals when $h$ approaches the limit points $h_{\pm}$, respectively.
   \medskip

    Observe that the symmetry of the surfaces $\Sigma$ and $C$ about $\{ x_3=0 \}$ indicates that $h=0$ is a critical point for both length functions. Consequently, $\ell'_{\Sigma}(0)=0$ and $\ell'_{C}(0)=0$. Furthermore, both the level curves $\Sigma_0$ and $C_0$ are geodesics on the surfaces $\Sigma$ and $C$, respectively. This implies that $F_3=F_3(\Sigma)=\ell_{\Sigma}(0)$ and $F_3=F_3(C)=\ell_{C}(0)$. Therefore, $\ell_{\Sigma}(0) = \ell_{C}(0)$.
    \medskip
    
The choice of $C$, along with the equation  $\ell_C''(h)= 4\frac{(2\pi)^2}{F_3^2}\ell_C(h)$, and the inequality $\ell_{\Sigma}''(h)< 4\frac{(2\pi)^2}{F_3^2}\ell_{\Sigma}(h)$, implies that
$$
\frac{d^2}{dh^2}(\ell_C(h)-\ell_{\Sigma}(h))> 4\frac{(2\pi)^2}{F_3^2} (\ell_C(h)-\ell_{\Sigma}(h)) 
$$
By employing the equations $\ell_{\Sigma}(0) = \ell_{C}(0)$ and $\ell'_{\Sigma}(0) = \ell'_{C}(0) =0$, and performing
a comparison of ordinary differential equations, it follows that for all $h\in [h_-,h_+],\  \ell_{\Sigma}(h) \leq \ell_C(h)$, and equality occurs only for $h= 0$. 
\end{proof}

\begin{theorem} \label{thm: main result}
    Assume $\bold{X} \in \mathcal{A}_V(\Omega ,2)$ represents the surface $\Sigma$ described in Theorem \ref{thm: existence for L''<4L}, satisfying the conditions specified in Proposition \ref{prop: symm.surface with L''<L}, with the flux vector $(0,0,F_3)$, and $C_2$ is the waist of 2-fold cover of the catenoid, symmetric with respect to the reflection through the horizontal plane, and having the flux vector $(0,0,F_3)$, it follows that
    $$
    Area(\Sigma) < Area (C_2 \cap \Omega)
    $$
\end{theorem}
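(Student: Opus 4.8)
The plan is to reduce the area of any surface in $\mathcal{A}_V(\Omega,2)$ to an integral over the slab involving only the level--curve function $\ell$, evaluate it in closed form for $C_2$, and then feed in the pointwise length comparison of Proposition~\ref{prop: length comparison in A(omega,2)}. First I would record the area formula in the normalized parameterization. By Lemma~\ref{lem: good repn} and Remark~\ref{rmk: good repn} we may assume $\psi_3\equiv c=\tfrac{F_3}{2\pi}$, so that $x_3(\bold{X}(z))=\tfrac{F_3}{2\pi}\log|z|$ and hence $\tfrac{dr}{r}=\tfrac{2\pi}{F_3}\,dh$. Writing the conformal area integral $\int\lambda^2\,dx\,dy$ in polar coordinates and using $\mu=r\lambda$ gives
$$
\mathrm{Area}(\bold{X}(A_{R_0,R_1}))=\frac{2\pi}{F_3}\int_{h_-}^{h_+}\Big(\int_0^{2\pi}\mu^2(r_he^{i\theta})\,d\theta\Big)\,dh .
$$
Since $\mu=\tfrac12(|G_-|^2+|G_+|^2)$ and $|G_-|\,|G_+|=|\psi_3|=|c|$ pointwise, Jensen's inequality yields $\int_0^{2\pi}\mu^2\,d\theta\ge\tfrac{1}{2\pi}\big(\int_0^{2\pi}\mu\,d\theta\big)^2=\tfrac{\ell(h)^2}{2\pi}$, with equality along a level circle precisely when $|g|$ is constant there. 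For the rotationally symmetric surface $C_2$ this holds identically, so $\mathrm{Area}(C_2\cap\Omega)=\tfrac{1}{F_3}\int_{h_-}^{h_+}\ell_C(h)^2\,dh$.

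Next I would make the Cauchy--Schwarz defect for $\Sigma$ explicit. Using the perturbed Weierstrass data of Theorem~\ref{thm: existence for L''<4L} under the symmetry normalization $c_2=\bar c_1$, $\epsilon_2=\bar\epsilon_1$, $\delta_2=\bar\delta_1$ of Proposition~\ref{prop: symm.surface with L''<L}, expand $|G_\pm|^2=\sum_k P_k^\pm(r)e^{ik\theta}$ in Fourier modes; the vertical--flux relations $\epsilon_j^2+2\delta_jc_j=0$ then give
$$
\int_0^{2\pi}\mu_\Sigma^2\,d\theta=\frac{\ell_\Sigma(h)^2}{2\pi}+D(h),\qquad D(h)=\pi|c|^2-\pi P_0^-P_0^+ +\frac{\pi}{2}\sum_{k\neq0}\big(|P_k^-|^2+|P_k^+|^2\big)\ \ge 0,
$$
an expression quadratic in $\epsilon_1$ (note $\pi|c|^2-\pi P_0^-P_0^+\le 0$ since $\int|G_-|^2\int|G_+|^2\ge(\int|G_-G_+|)^2=(2\pi|c|)^2$). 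Plugging this in,
$$
\mathrm{Area}(\Sigma)-\mathrm{Area}(C_2\cap\Omega)=\frac{1}{F_3}\int_{h_-}^{h_+}\big(\ell_\Sigma^2-\ell_C^2\big)\,dh+\frac{2\pi}{F_3}\int_{h_-}^{h_+}D(h)\,dh .
$$

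By Proposition~\ref{prop: length comparison in A(omega,2)} we have $\ell_\Sigma<\ell_C$ on $(h_-,h_+)\setminus\{0\}$, so the first integral is strictly negative; quantitatively, from $\ell_\Sigma''=4(\tfrac{2\pi}{F_3})^2\ell_\Sigma-\gamma$ (Theorem~\ref{thm: existence for L''<4L}) and $\ell_C''=4(\tfrac{2\pi}{F_3})^2\ell_C$ (Lemma~\ref{lem: ineq. lenghth for A^*}) with $\ell_\Sigma(0)=\ell_C(0)$ and $\ell_\Sigma'(0)=\ell_C'(0)=0$, one solves the ODE to get $\ell_C-\ell_\Sigma=\tfrac{\gamma}{4}\big(\tfrac{F_3}{2\pi}\big)^2\big(\cosh(\tfrac{4\pi}{F_3}h)-1\big)$, so both $\int(\ell_C^2-\ell_\Sigma^2)\,dh$ and $\int D\,dh$ reduce to elementary hyperbolic integrals in $h_+$, $F_3$, and the perturbation constants.

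The main obstacle is the last step: since the gain $\tfrac{1}{F_3}\int(\ell_\Sigma^2-\ell_C^2)\,dh<0$ and the loss $\tfrac{2\pi}{F_3}\int D\,dh>0$ are both of order $|\epsilon_1|^2$, one cannot conclude by a soft or limiting argument and must instead verify directly that, for the admissible range of the slab thickness $h_+$ and of $c_1,\epsilon_1,\delta_1$, the gain strictly dominates the loss. This is precisely where the explicit choices fixed in Theorem~\ref{thm: existence for L''<4L} and Proposition~\ref{prop: symm.surface with L''<L} are used; once the inequality between the two hyperbolic integrals is established the theorem follows immediately.
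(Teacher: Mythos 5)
Your setup is sound and in fact coincides with the paper's starting point: with the normalization $\psi_3\equiv F_3/2\pi$ one has $|\nabla_\Sigma x_3|=|c|/\mu$, so your formula $\mathrm{Area}=\frac{2\pi}{F_3}\int\!\!\int\mu^2$ is exactly the coarea identity $\frac{d}{dh}A_{\Sigma,h_0}(h)=\int_{\Sigma_h}|\nabla_\Sigma x_3|^{-1}$ used in the paper, and your splitting into $\ell_\Sigma^2/(2\pi)+D(h)$ is the Cauchy--Schwarz decomposition of that integrand. The problem is that you stop at the decisive step. You correctly identify that the gain $\frac{1}{F_3}\int(\ell_\Sigma^2-\ell_C^2)\,dh<0$ and the loss $\frac{2\pi}{F_3}\int D\,dh\ge 0$ are both of size $|\epsilon_1|^2$ and then state that one ``must verify directly'' which dominates --- but you never carry out that verification. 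Since for a general non-rotationally-symmetric competitor the Cauchy--Schwarz defect can perfectly well overwhelm a length deficit, this comparison \emph{is} the content of the theorem; leaving it as ``an inequality between two hyperbolic integrals'' to be checked later is a genuine gap, not a routine computation deferred.

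The missing observation --- which is what the paper's proof supplies --- is the scaling in $h$ rather than in $\epsilon_1$. Because $\Sigma_0$ is a geodesic, $|\nabla_\Sigma x_3|\equiv 1$ on $\Sigma_0$ and $|\nabla_\Sigma x_3|=1+a_2h^2+O(h^4)$ with $a_2$ a function on the level curve; hence $\mu$ deviates from its mean by $O(h^2)$ along $\Sigma_h$, and the defect $D(h)=\frac{2\pi}{F_3}\int_0^{2\pi}(\mu-\bar\mu)^2\,d\theta$ is $O(h^4)$. Meanwhile the flux constraint $\int_{\Sigma_h}|\nabla_\Sigma x_3|\,ds=F_3$ forces $\int_{\Sigma_h}a_2=-\sigma$, so that $\int_{\Sigma_h}|\nabla_\Sigma x_3|^{-1}=\ell_\Sigma(0)+2\sigma h^2+O(h^4)$: to second order in $h$ Cauchy--Schwarz is an \emph{equality}, and only the length gain $2(\sigma-c)h^2<0$ survives at that order. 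Since the theorem fixes the slab thin relative to the flux, the $h^2$ term dominates the $h^4$ defect and integration in $h$ gives $\mathrm{Area}(\Sigma)<\mathrm{Area}(C_2\cap\Omega)$. This is precisely the paper's argument (Taylor expansion of $|\nabla x_3|^{\pm1}$ about $h=0$ combined with the coarea formula). If you insert the estimate $D(h)=O(h^4)$ together with $\ell_C^2-\ell_\Sigma^2\gtrsim h^2$ into your decomposition, your proof closes; without it, it does not.
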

\begin{proof}
    Fix the slab $\Omega$ thin relative to the vertical flux, and simplify the notation by setting $C:=C_2$. By the smoothness of the function $x_3$ and the fact $\vert \nabla_{\Sigma_0} x_3\vert =1 $, where $\Sigma_0$ is the level curve corresponding to $h=0$, we can estimate $\vert \nabla_{\Sigma} x_3\vert$ by two first terms in its Taylor series as follows
    $$
    \vert \nabla_{\Sigma} x_3\vert \sim 1+a_2h^2
    $$
where $a_2$ is a non-constant function on the level curves $\Sigma_h$. Note that the first derivative with respect to $h$ is zero at $h=0$, which gives $a_1=0$. Similarly,
$$
\vert \nabla_{C} x_3\vert \sim 1+c_2h^2
$$
where $c_2$ is a constant function on the level curve $C_{h}$.\\
    Note that
    $$
    \int_{\Sigma_h} \vert \nabla_{\Sigma}x_3 \vert =F_3= \int_{C_h} \vert \nabla_{C}x_3 \vert 
    $$
    on all level curves $[h_-,h_+]$, and recall that $\ell_{\Sigma}(h)(=\mathcal{H}^1 (\Sigma_h)) < \ell_{C} (h)(=\mathcal{H}^1 (C_h)) $, for all non-zero $h\in [h_-,h_+]$. We compute
    $$
    \int_{\Sigma_h} \vert \nabla_{\Sigma}x_3 \vert \sim \int_{\Sigma_h} 1+a_2h^2 = \ell_{\Sigma}(h) +h^2\int_{\Sigma_h}a_2
    $$
    $$
   \int_{C_h} \vert \nabla_{C}x_3 \vert  \sim \int_{C_h} 1+c_2h^2 = \ell_C(h) +h^2\int_{C_h}c_2
    $$
    Note that the length functions $\ell_{\Sigma}(h)$ and $\ell_C(h)$ are smooth functions on the domain $[h_-,h_+]$, we can estimate by first terms in the Taylor series
$$
\ell_{\Sigma}(h)\sim \ell_{\Sigma}(0) +\sigma h^2
$$
and,
$$
\ell_C(h)\sim \ell_C(0) + ch^2
$$ 
Recall that the first derivative of the length functions with respect to $h$ is zero at $h=0$, which give the first non-constant terms in $h$ to be zero.
 Then,
 $$
  \int_{\Sigma_h} \vert \nabla_{\Sigma}x_3 \vert \sim \ell_{\Sigma}(0) + \sigma h^2+h^2\int_{\Sigma_h}a_2
 $$
 $$
   \int_{C_h} \vert \nabla_{C}x_3 \vert  \sim \ell_C(0) + ch^2 +h^2\int_{C_h}c_2
    $$
  By the fact that $F_3=\ell_{\Sigma}(0)=\ell_C(0)$, we conclude
  $$
  \sigma = - \int_{\Sigma_h}a_2,\ \ \text{and},\ \ c= -\int_{C_h}c_2,
  $$
where $\sigma < c$ by the proposition 3.3 for $h\neq 0$.\\
 Let $h_0 \in (h_-,h_+)$ and define the function $A_{\Sigma ,h_0}$ on $[h_0,h_+)$ by
    $$
    A_{\Sigma ,h_0}(h) = \mathcal{H}^2(\Sigma \cap \{ h_0 \leq x_3 <h\})
    $$
    and the function $A_{C,h_0}$ similarly.\\
    The co-area formula implies that 
    $$
    \frac{d}{dh}A_{\Sigma ,h_0} (h) = \int_{\{ x_3=h\} \cap \Sigma} \frac{1}{\vert \nabla_{\Sigma}x_3 \vert}
    $$
We write
$$
 \int_{\Sigma_h} \frac{1}{\vert \nabla_{\Sigma}x_3 \vert} \sim \int_{\Sigma_h} \frac{1}{1+a_2h^2} \sim \int_{\Sigma_h} 1-a_2h^2 
$$
 and,  
   $$
 \int_{C_h} \frac{1}{\vert \nabla_{C}x_3 \vert} \sim \int_{C_h} \frac{1}{1+c_2h^2} \sim \int_{C_h} 1-c_2h^2 
$$
 Then,
$$
 \int_{\Sigma_h} \frac{1}{\vert \nabla_{\Sigma}x_3 \vert} -  \int_{C_h} \frac{1}{\vert \nabla_{C}x_3 \vert} \sim \int_{\Sigma_h} (1-a_2h^2) - \int_{C_h} (1-c_2h^2) = (\sigma -c)h^2+(\sigma -c) <0
$$ 
where $h\neq 0$.\\
  Hence,
    $$
    \frac{d}{dt} A_{\Sigma ,h_0}(t) - \frac{d}{dt} A_{C ,h_0}(t) <0
    $$
    for all choices of $0\neq h_0$ and $h$, other than for $h_0=0$ when $h$ is tending to $0$. However, in the case of $h_0=0$ and $h\rightarrow 0$, it follows that
    $$
   \lim_{h\rightarrow 0} \frac{d}{dh} A_{\Sigma ,0}(h) \leq \lim_{h\rightarrow 0} \frac{d}{dh} A_{C ,0}(h).
    $$
    Letting $h_0 \rightarrow h_-$ and integrating over $h$ implies that 
   
    $$
   Area(\Sigma) < Area (C)= Area (C_2).
    $$ 
\end{proof}

\section{A note on the space $\mathcal{A}(\Omega,0)$}
 We conclude this note by exploring the set $\mathcal{A}_V(\Omega, 0)$. Notice that the surfaces within this set do not have a corresponding catenoid cover for comparison.
 Setting $k=0$, we observe that among all surfaces in $\mathcal{A}_V(\Omega, 0)$, those annuli featuring level curves resembling the figure eight exhibit the least self-intersections. It is expected that these featured surfaces within the set $\mathcal{A}_V(\Omega, 0)$ achieve the minimum area. 

\begin{theorem} \label{thm: annulus in A(omega, 0)}
   There exists an annulus $\bold{X}\in \mathcal{A}(\Omega ,0)$, denoted as $\Sigma$, featuring 
   the level curves $\Sigma_h$ of figure eight, which satisfy the following convexity on their length function
   $$
   2\ell_{\Sigma}(h)<\ell_{\Sigma}''(h)<4\ell_{\Sigma}(h)
   $$
   Additionally, this annulus $\bold{X} \in \mathcal{A}(\Omega ,0)$ is a part of a complete minimal surface in $\mathbb{R}^3$ with a total curvature equal to $-8\pi$, featuring two ends, each with a multiplicity of two.
\end{theorem}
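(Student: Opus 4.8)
The plan is to produce the required surface by an explicit choice of Weierstrass data and then read off its properties. Normalise as in Lemma~\ref{lem: good repn}, so that $dh=\tfrac{F_3}{2\pi}\tfrac{dz}{z}$ and, by Remark~\ref{rmk: good repn}, $\psi_3$ is constant; then a surface of $\mathcal A_V(\Omega,0)$ is determined by its Gauss map $g$, which must be holomorphic and nowhere vanishing on $A_{R_0,R_1}$, have winding number $0$ along the core circle, and — for the completion to have finite total curvature — extend to a rational map of $\overline{\mathbb C}$ of degree two. Since $WN=0$ is even we are in Case~$1$, $F_\pm=G_\pm^2$ with $G_+G_-$ constant. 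One writes down such a $g$ (equivalently $G_\pm$), with its zeros and poles arranged so that the winding number along the core circle is $2-2=0$ while the degree stays two, and then fixes the remaining free parameters by imposing \eqref{eqn: int, pp}, i.e. that the constant Laurent coefficients of $g$ and of $g^{-1}$ on the annulus both vanish; by the remarks surrounding \eqref{eqn: int, pp} this makes $\mathbf X=\operatorname{Re}\int\boldsymbol\Phi\,dz$ well defined on $A_{R_0,R_1}$ with vertical flux, so $\mathbf X\in\mathcal A_V(\Omega,0)$. As in Proposition~\ref{prop: symm.surface with L''<L}, the parameters can be chosen in addition so that $\mathbf X$ is symmetric under the reflection through $\{x_3=0\}$; in particular $\ell_\Sigma'(0)=0$ and $\Sigma_0$ is a geodesic.

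Next I would read off the geometry of the completion. Since $g$ extends to a degree-two rational map of $\overline{\mathbb C}$, Osserman's formula gives total curvature $-4\pi\deg g=-8\pi$ for the associated complete minimal surface, which is conformally a twice-punctured sphere; the local normal forms of $g$ and $dh$ at the two punctures exhibit each end as two-sheeted. For the level curves: $\Sigma_h$ is the image under the height reparametrisation of the circle $\{|z|=r_h\}$, and its tangent direction turns through $2\pi\cdot WN(\mathbf X)=0$ as one goes once around, so by the Whitney–Graustein theorem $\Sigma_h$ is regularly homotopic to a figure eight; a direct inspection of $\mathbf X$ on $\{|z|=r\}$ for the chosen data then shows that it has exactly one transverse self-intersection, with its two lobes traversed in opposite orientations, i.e. it is a figure eight.

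It remains to establish the two-sided convexity. The lower bound is immediate: $WN(\mathbf X)=0=2\cdot 0$, so $\mathbf X\in\mathcal A_V(\Omega,2k)$ with $k=0$, and Lemma~\ref{lem: L''(t)>2L(t)} gives $L''(t)>2L(t)$ on $A_{R_0,R_1}$, equivalently $\ell_\Sigma''(h)>2\bigl(\tfrac{2\pi}{F_3}\bigr)^2\ell_\Sigma(h)$, which reads $2\ell_\Sigma<\ell_\Sigma''$ once $F_3$ is normalised to $2\pi$. For the upper bound one argues as in the proof of Theorem~\ref{thm: existence for L''<4L}: writing $G_\pm(z)=\sum_n a_n^{\pm}z^n$ and using $|F_\pm|=|G_\pm|^2$ together with \eqref{L} and \eqref{ineq: int. norm.sqr.G} yields $L''(t)-4L(t)=4\pi\sum_n(n^2-1)\bigl(|a_n^-|^2+|a_n^+|^2\bigr)r^{2n}$, in which only the $n=0$ term is negative; choosing $\Omega$ thin relative to $F_3$ makes $A_{R_0,R_1}$ a thin annulus on which the winding-number-zero Gauss map, hence each $F_\pm$ and each $G_\pm$, is nearly constant, so the $n=0$ coefficients dominate the sum, which is then strictly negative for every $r\in(R_0,R_1)$ — that is, $\ell_\Sigma''<4\ell_\Sigma$. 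The main obstacle is the first step: producing data that simultaneously has winding number zero along the core circle, solves the period problem \eqref{eqn: int, pp}, extends to a complete minimal surface of the asserted type (reconciling the degree-two Gauss map with $WN=0$ and with two multiplicity-two ends), yields honest figure-eight level curves rather than more degenerately self-intersecting ones, and still satisfies the strict upper bound; the rigorous verification that the level sets are figure eights — not merely that their rotation index vanishes — is the most delicate point.
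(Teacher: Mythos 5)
Your overall strategy is the paper's: explicit Weierstrass data with a degree-two Gauss map whose winding number along the core circle is zero, the period/vertical-flux condition \eqref{eqn: int, pp}, a thin symmetric slab, Lemma \ref{lem: L''(t)>2L(t)} for the lower bound $2\ell_\Sigma<\ell_\Sigma''$, and Jorge--Meeks/Osserman for the total curvature $-8\pi$ and the two multiplicity-two ends. The figure-eight claim is treated at the same level of rigor as in the paper (you even add the Whitney--Graustein observation), so that is not where you diverge.

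There is, however, a genuine gap in your argument for the upper bound $\ell_\Sigma''<4\ell_\Sigma$. First, your insistence on the normalization $\psi_3=\mathrm{const}$ forces $G_+G_-=\mathrm{const}$, so if $G_-$ is a finite Laurent polynomial then $G_+=c/G_-$ has a full (infinite) Laurent expansion; the paper instead drops that normalization and takes \emph{both} $G_\pm(z)=a_{-1}z^{-1}+a_0+a_1z$ (with $a_0^2+2a_{-1}a_1=0$, all coefficients nonzero), so that in the identity $L''-4L=4\pi\sum_n(n^2-1)\bigl(|a_n^-|^2+|a_n^+|^2\bigr)r^{2n}$ every term with $|n|\ge 2$ is literally absent and the right-hand side equals $-4\pi\bigl(|a_0^-|^2+|a_0^+|^2\bigr)<0$ exactly, for every $r$ and with no thinness hypothesis. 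Second, your proposed justification --- that on a thin annulus the winding-number-zero Gauss map is ``nearly constant'' so the $n=0$ coefficients dominate --- is obstructed by the very period condition you impose: $\int_0^{2\pi}F_\pm\,d\theta=0$ forces $a_0^2+2\sum_{n\ge1}a_na_{-n}=0$, and since you need $a_0\neq 0$ (otherwise $L''\ge 4L$ by \eqref{ineq: int. norm.sqr.G} and the surface degenerates toward $\mathcal A^*$-type behavior), Cauchy--Schwarz gives $\sum_{n\neq 0}|a_n|^2r^{2n}\ge|a_0|^2$ near $r=1$. So $G_\pm$ can never be close to a nonzero constant, the annulus being thin does not help, and ``dominance of the $n=0$ term'' is not automatic for a general degree-two $g$ with the right winding number. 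The repair is exactly the paper's move: choose the data so that the Laurent spectrum of $G_\pm$ is supported on $\{-1,0,1\}$, which turns the desired strict inequality into the identity \eqref{conv. L for win. 0} and simultaneously pins down the two ends of multiplicity two after extending to $\mathbb C\setminus\{0\}$.
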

\begin{proof}
    We fix $\Omega$ as an open slab, as defined earlier in this note, and adhere to the notations and terms introduced previously.
    Let $\Sigma$ be a surface parameterized by $\bold{X}\in \mathcal{A}_V(\Omega ,0)$, for a conformal minimal mapping $\bold{X} : A_{R_0,R_1} \rightarrow \mathbb{R}^3$, serving as the corresponding Weierstrass representation. Here, the domain $A_{R_0,R_1}$ is sufficiently small and bounded. Let the flux vector for such a minimal annulus be denoted as $(0,0,F_3)$. Note that, according to Lemma \ref{lem: L''(t)>2L(t)}, we observe $L''(t) > 2L(t)$. To identify surfaces that exhibit the minimum growth rate of the length function, we explore potential surfaces that meet the criteria: 
\begin{equation} \label{ineq: L for wind. 0}
2L(t) < L''(t) < 4L(t),\ \forall t
\end{equation}
Alternatively, the associated conformal parameterization satisfies the following condition: 
\begin{equation}
    \nonumber
    2(\frac{2\pi}{F_3(\Sigma)})^2 \ell_{\Sigma} (h)< \ell_{\Sigma}'' (h) < 4(\frac{2\pi}{F_3(\Sigma)})^2 \ell_{\Sigma}(h),
\end{equation}
where $F_3(\Sigma)$ is the third component of the flux vector, and $h\in (h_{-},h{+})$. By a homothetical rescale of $\Sigma$, we can make $F_3=2\pi$, hence we make the assumption that:
$$
2 \ell_{\Sigma} (h)< \ell_{\Sigma}'' (h) < 4 \ell_{\Sigma}(h),
$$
Note that this is equivalent to 
\begin{equation} \label{ineq: F for win. 0}
   2\int_{0}^{2\pi} \vert F_{\pm} \vert d\theta < \int_{0}^{2\pi} r^2 \Delta \vert F_{\pm} \vert d\theta < 4\int_{0}^{2\pi} \vert F_{\pm} \vert d\theta ,  
\end{equation}

Recall that $F_{\pm}(z)=G_{\pm}^2(z)$, and the Gauss map $g(z)$ can be expressed as $g(z) = \frac{G_+(z)}{G_-(z)}$.
 To identify the best possible $F_{\pm}(z)$ that satisfies the inequalities (\ref{ineq: F for win. 0}), we need to examine the corresponding $G_{\pm}(z)$. Let $G(z)$ be analytic in the annulus $A_{R_0,R_1}$ and have there the Laurent expansion $G(z)=\underset{n=-\infty}{\overset{\infty}{\sum}}a_nz^n$. By comparing the two inequalities (\ref{ineq: int. norm.sqr.G}) and (\ref{ineq: F for win. 0}), we verify that the optimal candidates for the analytic functions $G_{\pm}(z)$ are polynomials of the form $G(z)= a_{-1}z^{-1} + a_0 + a_1z$, where $a_{-1}\neq 0, a_0\neq 0, a_1\neq 0$. 
 It should be noted that if we set any $a_i$ equal to zero in the equation for $G(z)$, for $i=-1,0,1$, it leads to an equation for the Gauss map associated with a surface that does not belong to $\mathcal{A}_V(\Omega ,0)$. Note that, we need to choose the domain $A_{R_0,R_1}$ and the constants $a_{-1},a_0,a_1$ so that the Gauss map $g(z)=\frac{G_+(z)}{G_-(z)}$ has no pole or zero in the domain $A_{R_0,R_1}$.
\medskip

 Consider $F(z)=G(z)^2 $, where $G(z)= a_{-1}z^{-1} + a_0 + a_1z$. Hence, $F(z)$ has neither poles nor zeros in the domain.  It is essential to ensure that $F(z)$ satisfies the condition associated with the period problem, making the resulting surface well-defined. This condition is expressed as $\int_{0}^{2\pi} F(re^{i\theta})d\theta =0$, which also ensures that the flux is vertical. By computing $F(z)=a_{-1}^{2}z^{-2}+2a_{-1}a_0z^{-1}+ a_0^2 + 2a_{-1}a_1 + 2a_0a_1z+a_1^2z^2$, we can apply the well-defined assumption to deduce that the constant term must vanish, leading to the equation $a_0^2 + 2a_{-1}a_1 =0$. 
By choosing the slab $\Omega$ thin enough, and so the domain $A_{R_0,R_1}$ small, we can ensure that the surface $\Sigma$ keeps a constant winding number in $\Omega$. Specifically, we may choose a thin slab, still denoted as $\Omega$, delimited by the original slab $\Omega$, and symmetric through reflection with respect to the horizontal plane. Set
$$
\Sigma := \Sigma \cap \Omega
$$
Here $\Omega$ is the new sufficiently thin slab, then
$$
\bold{X}^{-1}(\Sigma)=A ,
$$
for some topological annulus. By the Riemann mapping theorem there exists a conformal diffeomorphism $T$ from some planar annulus $A_{R_2,R_3}$ onto $A$, for some positive values $R_2 < R_3$, and $A_{R_2,R_3} \subset A_{R_0,R_1}$.\\
Consider the map $\bold{X}\circ T: A_{R_2,R_3}\rightarrow \mathcal{A}_V(\Omega ,0)$ and note that for a sufficiently thin slab $\Omega$, and subsequently, a thin annulus $A$, the map $T$ is close to the identity. Hence, $\bold{X}\circ T$ is a parameterization that approximates the same Weierstrass data.\\
In particular, computing the winding number, $k$, of the circles foliating the surface $\Sigma$ by:
$$
\int_0^{2\pi} \frac{\partial}{\partial \theta} arg F(re^{i\theta}) d\theta =2\pi k
$$
For the function $F(z)$, it can be verified that this integral is equal to zero. Hence, $\bold{X}\circ T(A_{R_2,R_3})$, and the surfaces constructed by these data are in $\mathcal{A}_V(\Omega ,0)$. 
\medskip

To construct the surface using the Weierstrass representation mapping method, consider the polynomials, $G_-(z) = a_{-1}z^{-1} + a_0 + a_1z$ and $G_+(z) = b_{-1}z^{-1} + b_0 + b_1z$ so that $a_i\neq 0$ and $b_i\neq 0$ for $i=-1,0,1$. Moreover, choose $a_i$ and $b_i$, for $i=-1,0,1$, so that $a_0^2 + 2a_{-1}a_1 =0$ and $b_0^2 + 2b_{-1}b_1 =0$, and ensure that $G_+(z)$ and $G_-(z)$ have no pole or zero in the domain. We compute $g(z)=\frac{b_{-1}z^{-1}+b_0+b_1z}{a_{-1}z^{-1}+a_0+a_1z}$ and $f(z)=z^{-1}(a_{-1}z^{-1}+a_0+a_1z)^2$. 
Then, set $F_{\pm} = G_{\pm}^2 (z)$.\\
Considering the equations (\ref{L"}) and (\ref{L}), we obtain the expressions:
\begin{equation}
    \nonumber
    \begin{split}
    & \frac{\psi_3}{g} = F_-(z) = a_{-1}^{2}z^{-2}+2a_{-1}a_0z^{-1}+2a_0a_1z+a_1^2z^2,  \\  
   & \psi_3 g = F_+(z) = b_{-1}^{2}z^{-2}+2b_{-1}b_0z^{-1}+2b_0b_1z+b_1^2z^2,
    \end{split}
\end{equation} 
where $\psi_3 =z\phi_3$.
And, the associated Weierstrass data are as follows:
\begin{equation}
    \nonumber
        \phi_1 = \frac{1}{2} f(1-g^2),\ \ \
        \phi_2 = \frac{i}{2} (1+g^2),\ \ \
        \phi_3 = fg    
\end{equation}

It follows that
\begin{equation}
    \nonumber
    \begin{split}
       & \phi_1 = \frac{1}{2} z^{-1} \big( (a_{-1}z^{-1}+a_0+a_1z)^2 - (b_{-1}z^{-1}+b_0+b_1z)^2 \big)   \\
       & \phi_2 = \frac{i}{2} z^{-1} \big( (a_{-1}z^{-1}+a_0+a_1z)^2 + (b_{-1}z^{-1}+b_0+b_1z)^2 \big)   \\
      &  \phi_3 = z^{-1}  (a_{-1}z^{-1}+a_0+a_1z)  (b_{-1}z^{-1}+b_0+b_1z) 
    \end{split}
\end{equation}
Simplifying,
\begin{equation}
    \nonumber
    \begin{split}
       & \phi_1 = \frac{a_{-1}^2-b_{-1}^2+2(a_{-1}a_0-b_{-1}b_0)z+2(a_0a_1-b_0b_1) z^3 +(a_1^2-b_1^2)z^4}{2z^3}      \\
       & \phi_2 = \frac{i(a_{-1}^2+b_{-1}^2+2(a_{-1}a_0+b_{-1}b_0)z+2(a_0a_1+b_0b_1) z^3 +(a_1^2+b_1^2)z^4)}{2z^3}   \\
      &  \phi_3 = \frac{a_{-1}b_{-1}+(a_{-1}b_0+a_0b_{-1})z+(a_{-1}b_1+a_0b_0+a_1b_{-1})z^2+(a_0b_1+a_1b_0)z^3+a_1b_1z^4}{z^3}  
    \end{split}
\end{equation}

Here, $a_0^2 + 2a_{-1}a_1 =0$ and $b_0^2 + 2b_{-1}b_1 =0$, and $a_i\neq 0$,  $b_i\neq 0$ for $i=-1,0,1$.
\medskip

Consider the mapping $\bold{X}$ on the domain $A_{R_2,R_3}$, where the domain is selected based on the criteria described above, so that $\bold{X}$ represents an annulus with a winding number zero through $A_{R_2,R_3}$. We can compute the zeros of the complexified height differential $\phi_3$ by solving equations  $a_{-1}z^{-1}+a_0+a_1z=0$ and $b_{-1}z^{-1}+b_0+b_1z=0$. Let $z_1,z_2$ and $z_3,z_4$ be the corresponding zeros, respectively. We can verify that $z_j \in \mathbb{C}$ for $j=1,2,3,4$ lie on four distinct circles for some appropriate choices of $a_i$s and $b_i$s, where $i=-1,0,1$. We choose $A_{R_2,R_3}$ so that the points $z_1$ and $z_3$ are inside the disk $D_{R_2}(0)$ and the points $z_2$ and $z_4$ are outside the disk $D_{R_3}(0)$. Hence, by the Cauchy-Residue theorem the winding number of $g(z)$ is zero on $A_{R_2,R_3}$, and this ensures that the surface $\bold{X}$ has a constant winding number on the domain $A_{R_2,R_3}$. Furthermore, we can arrange the surface $\bold{X}$ to be symmetric with respect to the horizontal plane $\{ x_3=0\}$ through reflection, see Step One below, which ensures that the flux is vertical.
\medskip

The mapping $\bold{X}(z)=\bold{X}(z_0)+\text{Re} \int_{z_0}^z(\phi_1,\phi_2,\phi_3)$ defined on the annulus $A_{R_2,R_3}$ can be extended to the complex plane $\mathbb{C}$ minus a point by letting $R_2 \rightarrow 0$ and $R_3 \rightarrow \infty$ on $A_{R_2,R_3}$. However, the winding number may change as the domain expands; in fact, it changes at the points where the complexified height differential $\phi_3$ vanishes. Additionally, expanding the domain can cause the Gauss map to develop poles or zeros. In fact, the corresponding expanded minimal surface is conformal to the sphere minus two points. Those two points correspond to the ends at $\infty$ and $0$, each with a multiplicity of two. Therefore, by the Jorge-Meeks formula, the surface is of total curvature $-8\pi$. This is a complete minimal surface. Furthermore, integration of the Weierstrass data once around $0$ adds the period $(0,0,2\pi i)$ to $\int (\phi_1,\phi_2,\phi_3)$. Hence, the surface described with the map $\bold{X}(z)$ is defined on $\mathbb{C}- \{0 \}$.

The interesting fact about this surface is that, up to rotation and rigid motion, a part of the surface intersecting the slab $\Omega$ belongs to $\mathcal{A}_V(\Omega ,0)$. More interesting, the level curves of this specific section of the surface take the form of a figure eight. 
\end{proof}

\begin{corollary} 
    If we consider $\Sigma$ and $\bold{X} \in \mathcal{A}_V(\Omega ,0)$ as a surface constructed according to Theorem \ref{thm: annulus in A(omega, 0)}, then the expression for $\ell_{\Sigma}''(h)$ is given by the following equation: 
    \begin{equation} \label{coro: 2nd ord.eqn. involving F3/2}
    \ell_{\Sigma}''(h) = 4(\frac{2\pi}{F_3(\Sigma)})^2 \ell_{\Sigma}(h) - \frac{1}{\pi} (\vert c_-\vert^2 + \vert c_+ \vert^2),
    \end{equation}
    where $c_-:=\int_0^{2\pi} G_-(z)d\theta = 2\pi a_0$ and $c_+ :=\int_0^{2\pi} G_+(z)d\theta = 2\pi b_0$.
\end{corollary}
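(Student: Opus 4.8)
The plan is to read off the identity directly from the Weierstrass data constructed in Theorem~\ref{thm: annulus in A(omega, 0)}, where $G_-(z)=a_{-1}z^{-1}+a_0+a_1z$ and $G_+(z)=b_{-1}z^{-1}+b_0+b_1z$ with $a_i,b_i\neq 0$. The key observation is that these $G_\pm$ are \emph{exactly} of the special form for which the inequality (\ref{ineq: int. norm.sqr.G}) is an equality for every $r$, since by the Remark following (\ref{ineq: int. norm.sqr.G}) equality holds iff $a_n=0$ for all $n\neq -1,0,1$, which is precisely our situation. Thus for each $\rho\in(R_2,R_3)$ we get the exact equalities
\begin{equation}
\nonumber
\int_0^{2\pi} r^2\Delta|G_\pm|^2\, d\theta = 4\int_0^{2\pi}|G_\pm|^2\, d\theta - 8\pi|a_0|^2\quad(\text{resp. }|b_0|^2).
\end{equation}

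The second step is bookkeeping: by (\ref{L"}) and (\ref{L}), written with $F_\pm=G_\pm^2$ so that $|F_\pm|=|G_\pm|^2$, we have $L''(t)=\tfrac12\int_0^{2\pi} r^2(\Delta|G_-|^2+\Delta|G_+|^2)\,d\theta$ and $L(t)=\tfrac12\int_0^{2\pi}(|G_-|^2+|G_+|^2)\,d\theta$. Adding the two equalities above and dividing by $2$ yields $L''(t)=4L(t)-4\pi(|a_0|^2+|b_0|^2)$. Since $\int_0^{2\pi}G_\pm\,d\theta=2\pi a_0$ (resp. $2\pi b_0$), we have $|c_\pm|^2=4\pi^2|a_0|^2$ (resp. $4\pi^2|b_0|^2$), so $4\pi(|a_0|^2+|b_0|^2)=\tfrac{1}{\pi}(|c_-|^2+|c_+|^2)$, giving $L''(t)=4L(t)-\tfrac1\pi(|c_-|^2+|c_+|^2)$.

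The final step translates from the $L,t$ picture to the $\ell_\Sigma,h$ picture. Using the relation $t=\log r$ together with the reparameterization $r_h = h_0 + \tfrac{2\pi}{F_3}h$ recorded in the introduction (equivalently $\ell_\Sigma(h)=L(r_h)$ with $F_3=F_3(\Sigma)$), the differential operator $\tfrac{d^2}{dt^2}$ acting on $L$ becomes $\big(\tfrac{F_3}{2\pi}\big)^2\tfrac{d^2}{dh^2}$ acting on $\ell_\Sigma$, or conversely $\ell_\Sigma''(h)=\big(\tfrac{2\pi}{F_3}\big)^2 L''(t)$; this is the same passage used to derive (\ref{ineq: ell''-k^2m^2ell}) from $L''(t)\geq L(t)$. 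Substituting gives exactly (\ref{coro: 2nd ord.eqn. involving F3/2}).

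I do not expect a genuine obstacle here; the statement is essentially a restatement of the computation inside Theorem~\ref{thm: annulus in A(omega, 0)} once one notes the equality case of (\ref{ineq: int. norm.sqr.G}) applies verbatim to $G_\pm$. The only point requiring a little care is keeping the constants straight through the two changes of variable ($z\mapsto t=\log r$ and $r\mapsto h$), in particular the factor $\big(\tfrac{2\pi}{F_3}\big)^2$ and the passage from $|a_0|^2,|b_0|^2$ to $|c_\pm|^2$; so the proof amounts to assembling Lemma~\ref{lem: L''(t)>2L(t)}'s machinery with these two identifications.
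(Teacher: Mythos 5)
Your proposal is correct and follows essentially the same route as the paper: verify that the specific Laurent polynomials $G_\pm$ realize the equality case of (\ref{ineq: int. norm.sqr.G}), feed this into (\ref{L"}) and (\ref{L}) to get $L''(t)=4L(t)-\tfrac1\pi(|c_-|^2+|c_+|^2)$, and then pass to $\ell_\Sigma(h)$. Your closing remark about ``keeping the constants straight'' is apt, since the stated form of (\ref{coro: 2nd ord.eqn. involving F3/2}) only matches the substitution $\ell_\Sigma''(h)=\bigl(\tfrac{2\pi}{F_3}\bigr)^2L''(t)$ verbatim under the normalization $F_3=2\pi$ adopted in the proof of Theorem \ref{thm: annulus in A(omega, 0)}; otherwise the last term also carries the factor $\bigl(\tfrac{2\pi}{F_3}\bigr)^2$.
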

\begin{proof}
    It can be readily verified that the functions $G_-$ and $G_+$, as defined in Theorem $\ref{thm: annulus in A(omega, 0)}$, meet the following conditions:
\begin{equation}
    \nonumber
    \begin{split}
    & \int_0^{2\pi} r^2 \Delta \vert G_-\vert^2 d\theta = 4\int_0^{2\pi} \vert G_-\vert^2 d\theta - \frac{2}{\pi} \vert c_- \vert^2 \\
    & \int_0^{2\pi} r^2 \Delta \vert G_+\vert^2 d\theta = 4\int_0^{2\pi} \vert G_+\vert^2 d\theta - \frac{2}{\pi} \vert c_+ \vert^2
    \end{split}
\end{equation}
By comparing these equations to equations (\ref{L"}) and (\ref{L}), we observe that: 
\begin{equation} \label{conv. L for win. 0}
   L''(t) = 4L(t) - \frac{1}{\pi} (\vert c_- \vert^2 + \vert c_+ \vert^2) 
\end{equation}
The assertion presents an alternative expression for this equation.
\end{proof}

As mentioned before, there is no zero-cover catenoid available for comparison with the surfaces in the set $\mathcal{A}_V(\Omega,0)$. However, in the following analysis, we compare the surfaces in $\mathcal{A}_V(\Omega,0)$ with the waists of catenoid with a winding number one. Specifically, in the following theorem, we address Problem 2. from \cite{choe2016area}, which concerns the comparison between the area of catenoid waists and the area of the annuli having figure-eight level curves. In the following theorem \ref{thm: area(k=0) > C}, the horizontal slab $\Omega$ may not necessarily be symmetric with respect to reflection through the horizontal plane $\{x_3=0\}$, namely $h_-$ is not necessarily equal to $-h_+$.

\begin{theorem} \label{thm: area(k=0) > C}
   Let $P_-=\{ x_3=h_-\}$ and $P_+=\{ x_3=h_+\}$ be distinct parallel planes in $\mathbb{R}^3$ with $h_-<h_+$ and let $\Omega$ be the open slab between them.
   Suppose that $\Sigma$ is an annulus parameterized with $\bold{X}\in \mathcal{A}_V(\Omega ,0)$  and assume $Flux(\Sigma )=(0,0,F_3)$. Let $P_0=\{ x_3=h_0\} \subset \Omega$ denote the plane that satisfies:
   $$
   \ell_{\Sigma}(h_0)=\underset{h\in (h_-,h_+)}{\inf} \ell_{\Sigma}(h)
   $$
   Let $C \in \mathcal{A}_V(\Omega ,1)$ be the waist of the complete catenoid which is symmetric with respect to reflection through the plane $P_0$, and also has $Flux(C)=(0,0,F_3)$. Then
   $$
   Area (\Sigma) > Area (C)
   $$
   in particular,
   $$
   Area (\Sigma ) > Area (C_{\Omega})
   $$
   where $C_{\Omega}$ is the maximally symmetric marginally stable waist of the catenoid spanning $\Omega$. 
\end{theorem}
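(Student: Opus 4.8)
The plan is to reduce the area comparison to a pointwise inequality between the length functions $\ell_\Sigma$ and $\ell_C$ on $[h_-,h_+]$, and to derive that inequality by an ODE comparison fed by Lemma~\ref{lem: L''(t)>2L(t)}.

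First I would normalize. Put both $\Sigma$ and $C$ in the reparametrization of Lemma~\ref{lem: good repn}, so that $dh=\tfrac{F_3}{2\pi}\tfrac{dz}{z}$, $\psi_3$ is constant, and (as Bernstein--Breiner observed) $\log|z|$ is an affine function of $x_3$ with slope $\tfrac{2\pi}{F_3}$. Using $\mu=r\lambda$, the area in conformal coordinates is $\int\!\!\int\mu^2\,\tfrac{dr}{r}\,d\theta$; changing the radial variable to $h=x_3$ and applying Cauchy--Schwarz to the angular integral gives $\mathrm{Area}(\Sigma)=\tfrac{2\pi}{F_3}\int_{h_-}^{h_+}\!\big(\int_0^{2\pi}\mu^2\,d\theta\big)\,dh\ \ge\ \tfrac1{F_3}\int_{h_-}^{h_+}\ell_\Sigma(h)^2\,dh$, since $\ell_\Sigma(h)=\int_0^{2\pi}\mu\,d\theta$. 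For the catenoid $C$ the Gauss map in this model is rotationally symmetric, so $\mu_C$ depends on $|z|$ alone, Cauchy--Schwarz is an equality, and $\mathrm{Area}(C\cap\Omega)=\tfrac1{F_3}\int_{h_-}^{h_+}\ell_C(h)^2\,dh$; here $\ell_C(h)=F_3\cosh\!\big(\tfrac{2\pi}{F_3}(h-h_0)\big)$ (the waist length of a catenoid of vertical flux $F_3$ equals $F_3$, and $\ell_C''=(\tfrac{2\pi}{F_3})^2\ell_C$ with $\ell_C'(h_0)=0$).

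Next I would prove $\ell_\Sigma(h)>\ell_C(h)$ for $h\ne h_0$. By Lemma~\ref{lem: L''(t)>2L(t)} with $k=0$, $\ell_\Sigma''(h)>2(\tfrac{2\pi}{F_3})^2\ell_\Sigma(h)>(\tfrac{2\pi}{F_3})^2\ell_\Sigma(h)$. Also $h_0$ is an interior minimum of $\ell_\Sigma$, so $\ell_\Sigma'(h_0)=0$; $C$ is symmetric about $P_0$, so $\ell_C'(h_0)=0$; and $\ell_\Sigma(h_0)=\inf_h\ell_\Sigma(h)\ge F_3=\ell_C(h_0)$, the bound $F_3\le\ell_\Sigma(h)$ coming from $F_3=\big|\int_{\Sigma_h}e_3\cdot\nu\,ds\big|\le\int_{\Sigma_h}ds=\ell_\Sigma(h)$. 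Hence, with $m:=\tfrac{2\pi}{F_3}$, the function $E:=\ell_\Sigma-\ell_C$ satisfies $E''>m^2E$, $E(h_0)\ge0$, $E'(h_0)=0$ (legitimate since $\ell_\Sigma$ is smooth on $(h_-,h_+)$ by Bernstein--Breiner, and the transversality argument quoted after Lemma~\ref{lem: good repn} extends this to immersed surfaces). Setting $u:=E'\cosh(m(h-h_0))-mE\sinh(m(h-h_0))$ gives $u'=(E''-m^2E)\cosh(m(h-h_0))>0$ and $u(h_0)=0$, so $u$ has the sign of $h-h_0$; since $\big(E/\cosh(m(h-h_0))\big)'=u/\cosh^2(m(h-h_0))$, the function $E/\cosh(m(h-h_0))$ is strictly monotone on each side of $h_0$ and equals $E(h_0)\ge0$ there, whence $E>0$ for $h\ne h_0$. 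Consequently $\ell_\Sigma(h)^2>\ell_C(h)^2$ off $h_0$, so $\int_{h_-}^{h_+}\ell_\Sigma^2>\int_{h_-}^{h_+}\ell_C^2$, and combining with the first step, $\mathrm{Area}(\Sigma)\ge\tfrac1{F_3}\int\ell_\Sigma^2>\tfrac1{F_3}\int\ell_C^2=\mathrm{Area}(C)$.

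Finally, $C$ is an embedded catenoidal annulus spanning $\Omega$, hence an element of $\mathcal{A}(\Omega,1)$, so $\mathrm{Area}(C)\ge\mathrm{Area}(C_\Omega)$ by the least-area characterization of $C_\Omega$ in $\mathcal{A}(\Omega,1)$ (Theorem~\ref{thm: A^* least area}; cf. \cite{bernstein2014variational}), giving $\mathrm{Area}(\Sigma)>\mathrm{Area}(C_\Omega)$. I expect the genuine obstacle to be the first step --- setting up the common conformal model and showing that, after the Cauchy--Schwarz reduction, $\mathrm{Area}$ equals $\tfrac1{F_3}\int\ell^2$ for the catenoid while being $\ge\tfrac1{F_3}\int\ell^2$ for $\Sigma$, i.e.\ isolating exactly where the catenoid's rotational symmetry is used; once this is in place the ODE comparison and the conclusion are routine, and the only new ingredient over the embedded case of \cite{bernstein2014variational} is the strict inequality $\ell_\Sigma''>2m^2\ell_\Sigma$ of Lemma~\ref{lem: L''(t)>2L(t)}, which applies to all winding-number-zero annuli, including the figure-eight examples of Theorem~\ref{thm: annulus in A(omega, 0)}.
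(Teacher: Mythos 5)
Your proposal is correct and follows essentially the same route as the paper: the strict convexity $\ell_\Sigma''>2\big(\tfrac{2\pi}{F_3}\big)^2\ell_\Sigma$ from Lemma~\ref{lem: L''(t)>2L(t)}, an ODE comparison with $\ell_C''=\big(\tfrac{2\pi}{F_3}\big)^2\ell_C$ pinned at $h_0$ by $\ell_\Sigma(h_0)\ge F_3=\ell_C(h_0)$ to get $\ell_\Sigma>\ell_C$ off $h_0$, then the Cauchy--Schwarz lower bound $\tfrac{d}{dh}A\ge \ell^2/F_3$ (which you derive in conformal coordinates and the paper derives via the co-area formula --- the same inequality), with equality for the rotationally symmetric catenoid, and finally Bernstein--Breiner for $C_\Omega$. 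The only piece of the paper's argument you do not address is the possibility that the infimum of $\ell_\Sigma$ over $(h_-,h_+)$ is attained only in the limit at an endpoint (so $\ell_\Sigma'(h_0)=0$ is unavailable); the paper handles this by comparing with an $\epsilon$-shifted catenoid $\ell_{C,\epsilon}(h)=\ell_C(h-\epsilon)$ on $(h_-+\epsilon,h_+)$ and letting $\epsilon\to 0$, and your argument would need the same patch if that case is to be covered.
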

\begin{proof}
    By Lemma \ref{lem: L''(t)>2L(t)}, for any annulus $\Sigma \in \mathcal{A}_V(\Omega ,0)$ the length function of the level curves satisfies $L''(t)> 2L(t)$, or equivalently $\ell''_{\Sigma}(h)>2 \frac{(2\pi)^2}{F_3^2}\ell_{\Sigma}(h)$. On the other hand, for the catenoid $C\in \mathcal{A}_V(\Omega ,1)$ the length function satisfies $\ell''_{C}(h)=\frac{(2\pi)^2}{F_3^2}\ell_{C}(h)$.  First suppose that $h_0\in (h_-,h_+)$, using the fact that $C$ is symmetric with respect to reflection through the plane $P_0$, an argument similar to the proof of the proposition \ref{prop: length comparison in A(omega,2)} shows that: 
   $$
   \ell_{\Sigma}(h) > \ell_C(h),\ \ \ \forall h(\neq h_0) \in (h_-,h_+)
   $$
   Now suppose that $h_0$ is an endpoint, and assume $h_0=h_-$. For $\epsilon >0$ small, set $\ell_{C,\epsilon}(h)=\ell_{C}(h-\epsilon)$ and restrict to $(h_-+\epsilon ,h_+)$. Then, $\ell_{C,\epsilon}''=\frac{(2\pi)^2}{F_3^2}\ell_{C,\epsilon}$, $\ell_{C,\epsilon}'(h_-+\epsilon)=0$ and $\ell_{\Sigma}(h_-+\epsilon)>\ell_{\Sigma}(h_-)\geq \ell_{C,\epsilon}(h_-+\epsilon)$. Note that $\ell_{\Sigma}$ is convex and has its minimum at $h_-$, so $\ell_{\Sigma}'(h_-+\epsilon)>0$. Hence, by an ODE comparison, $\ell_{\Sigma}(h)>\ell_{C,\epsilon}(h)$ for $h\in [h_-+\epsilon ,h_+)$. Letting $\epsilon \rightarrow 0$ implies $\ell_{\Sigma}(h)> \ell_{C}(h)$ for $h\in (h_-,h_+)$.  An identical argument applies when $h_0=h_+$.
    \medskip
   
   Let $h_*\in (h_-,h_+)$ and define the function $A_{\Sigma ,h_*}$ on $[h_*,h_+)$ by
   $$
   A_{\Sigma ,h_*}(h)=\mathcal{H}^2(\Sigma \cap \{ h_* \leq x_3 <h\})
   $$
   and the function $A_{C,h_*}$ similarly.\\
   The co-area formula implies that 
   $$
   \frac{d}{dh}A_{\Sigma ,h_*}(h)= \int_{\{ x_3=h\}} \frac{1}{\vert \nabla_{\Sigma}x_3\vert}
   $$
By the Cauchy- Schwarz inequality:
$$
\frac{d}{dh}A_{\Sigma ,h_*}(h) \geq \frac{\ell_{\Sigma}^2(h)}{\int_{\Sigma_h}\vert \nabla_{\Sigma}x_3\vert}= \frac{\ell_{\Sigma}^2(h)}{F_3}
$$
One has equality if and only if $\frac{1}{\vert \nabla_{\Sigma}x_3\vert}$ and $\vert \nabla_{\Sigma}x_3\vert$ are linearly dependent, so are both constant. This is the case on the catenoid $C$, so using the above estimate for length:
$$
\frac{d}{dh}A_{\Sigma ,h_*}(h) >  \frac{\ell_{C}^2(h)}{F_3} = \frac{d}{dh}A_{C ,h_*}(h)
$$
By integrating, we find that $\frac{d}{dh}A_{\Sigma ,h_*}(h) > \frac{d}{dh}A_{C ,h_*}(h)$ for $h\in [h_*,h_+)$. Letting $h_* \rightarrow h_-$  proves the theorem for the catenoid waist $C$ described above. Additionally, Theorem 1.1 in \cite{bernstein2014variational} establishes the result for the maximally symmetric marginally stable waist of the catenoid spanning $\Omega$, denoted as $C_{\Omega}$.
\end{proof}

\begin{rem}
    \rm  By Lemma \ref{lem: L''(t)>2L(t)}, a similar argument shows that Theorem \ref{thm: area(k=0) > C}, in fact, holds for any $\bold{X} \in \mathcal{A}_V(\Omega ,2k)$, $k\geq 0$.
\end{rem}

We then conclude this section by comparing the area of the waists of two-fold covers of the catenoid with the area of an example from Theorem  \ref{thm: annulus in A(omega, 0)}.
Fix the slab $\Omega$ as described in the introduction. We continue to assume that the positive number, $h_+=-h_-$, associated with the slab $\Omega$, is suitably small. Employing reasoning analogous to that used for the surfaces in $\mathcal{A}_V(\Omega ,2)$ constructed in Section 3, we identify the surfaces in $\mathcal{A}_V(\Omega ,0)$ that achieve an area smaller than the double cover of the waist of a catenoid spanning $\Omega$ and having the same $F_3$. Here, $F_3$ still denotes the third component of the surface's flux vector. This can be accomplished by taking two major steps: 

\begin{theorem}
    Assume $\bold{X} \in \mathcal{A}_V(\Omega ,0)$ is an annulus constructed in Theorem \ref{thm: annulus in A(omega, 0)}, which is symmetric with respect to reflection through the horizontal plane $\{ x_3=0\}$, with $Flux(\Sigma)=(0,0,F_3)$. Suppose that $C_2$ is a 2-fold cover of the waist the catenoid, symmetric with respect to the reflection through the horizontal plane $\{ x_3=0\}$, and $C_2$ having the flux vector $(0,0,F_3)$, it follows that
    $$
    Area(\Sigma) < Area (C_2 \cap \Omega).
    $$
\end{theorem}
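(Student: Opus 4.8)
The plan is to run the Section~3 argument in the winding number zero setting: the annulus $\Sigma$ of Theorem~\ref{thm: annulus in A(omega, 0)} takes the place of the perturbed $2$-fold cover of Theorem~\ref{thm: existence for L''<4L}, and the identity (\ref{coro: 2nd ord.eqn. involving F3/2}) replaces the strict inequality $L''(t)<4L(t)$ from the winding number $2$ case. The proof divides into the two steps announced above.

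\textbf{Step One (making $\Sigma$ symmetric).} Following the proof of Proposition~\ref{prop: symm.surface with L''<L}, I would choose the six constants in $G_-(z)=a_{-1}z^{-1}+a_0+a_1z$, $G_+(z)=b_{-1}z^{-1}+b_0+b_1z$ so that, for the involution $\psi(z)=1/\bar z$ of the annulus, one has $\psi^*g=1/\overline{g}$ and $\psi^*dh=-\overline{dh}$; with $g=G_+/G_-$ and $dh=z^{-1}G_-G_+\,dz$ this amounts to $G_+(1/\bar z)=\overline{G_-(z)}$ and $G_-(1/\bar z)=\overline{G_+(z)}$, i.e. $b_{-1}=\bar a_1$, $b_0=\bar a_0$, $b_1=\bar a_{-1}$, whereupon the Weierstrass formula gives $\psi^*\bold{X}=R_3\circ\bold{X}$, so $\Sigma$ is invariant under the reflection $R_3$ through $\{x_3=0\}$ and has vertical flux. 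The point is that this symmetry can be imposed without disturbing anything required in Theorem~\ref{thm: annulus in A(omega, 0)}: the period condition $b_0^2+2b_{-1}b_1=0$ is just the conjugate of $a_0^2+2a_{-1}a_1=0$; all six coefficients can still be taken nonzero; and since the zeros of $G_+$ are the $\psi$-images of the zeros of $G_-$, choosing $R_0=1/R_1$ and the two roots $\rho_1,\rho_2$ of $G_-$ with $|\rho_1|<R_0<R_1<|\rho_2|$ keeps $g=G_+/G_-$ free of poles and zeros on $A_{R_0,R_1}$, with winding number $0$, and keeps the level curves figure-eight shaped. After restricting to a thin symmetric sub-slab $\Omega$ and reparametrizing exactly as in Theorem~\ref{thm: annulus in A(omega, 0)}, $\Sigma$ is a symmetric element of $\mathcal{A}_V(\Omega,0)$ to which (\ref{coro: 2nd ord.eqn. involving F3/2}) applies with $c_-=2\pi a_0\ne0$ and $c_+=2\pi b_0\ne0$.

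\textbf{Step Two (length comparison and area).} Since $\Sigma$ and $C_2$ are both symmetric through $\{x_3=0\}$, the level curves $\Sigma_0$ and $(C_2)_0$ are fixed-point sets of $R_3$, hence geodesics; combined with $F_3=\bigl|\int_{\Sigma_h}\bold{e}_3\cdot\nu\,ds\bigr|\le\ell_\Sigma(h)$ (equality on geodesic level curves) this gives $\ell_\Sigma(0)=F_3=\ell_{C_2}(0)$ and $\ell_\Sigma'(0)=\ell_{C_2}'(0)=0$. By (\ref{coro: 2nd ord.eqn. involving F3/2}), $\ell_\Sigma''(h)=4(2\pi/F_3)^2\ell_\Sigma(h)-\frac{1}{\pi}(|c_-|^2+|c_+|^2)$, while $\ell_{C_2}''(h)=4(2\pi/F_3)^2\ell_{C_2}(h)$ by Lemma~\ref{lem: ineq. lenghth for A^*}, so $u:=\ell_{C_2}-\ell_\Sigma$ satisfies $u''=4(2\pi/F_3)^2u+\frac{1}{\pi}(|c_-|^2+|c_+|^2)$ with $u(0)=u'(0)=0$. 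The Wronskian comparison against $\sinh$ used in Proposition~\ref{prop: length comparison in A(omega,2)} then forces $\ell_\Sigma(h)<\ell_{C_2}(h)$ for every nonzero $h\in[h_-,h_+]$. From here I would copy the co-area argument of Theorem~\ref{thm: main result}: Taylor expanding $|\nabla_\Sigma x_3|\sim1+a_2h^2$ and $|\nabla_{C_2}x_3|\sim1+c_2h^2$ near $h=0$ (no linear term, by symmetry and $|\nabla x_3|=1$ on the geodesic level curve), the identities $\int_{\Sigma_h}|\nabla_\Sigma x_3|=F_3=\int_{(C_2)_h}|\nabla_{C_2}x_3|$ together with $\ell_\Sigma(h)<\ell_{C_2}(h)$ give $\frac{d}{dh}A_{\Sigma,h_*}(h)<\frac{d}{dh}A_{C_2,h_*}(h)$ for $h\ne0$, and integrating in $h$ and letting $h_*\to h_-$ produces $Area(\Sigma)<Area(C_2\cap\Omega)$.

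\textbf{Where the work is.} Everything in Step Two is a line-for-line reprise of the winding number $2$ arguments, the only structural change being that the strict inequality $\ell_\Sigma''<4(2\pi/F_3)^2\ell_\Sigma$ now comes from the equality (\ref{coro: 2nd ord.eqn. involving F3/2}) together with $c_\pm\ne0$, rather than from a Laurent-series estimate. The genuinely new point, and the one I expect to be the main technical obstacle, is the compatibility check in Step One --- that reflection symmetry, the figure-eight/winding-zero geometry, absence of poles and zeros of $g$ on the annulus, and the period condition can all be met at once by a single choice of nonzero constants --- but this reduces to the observation that $\psi$ swaps $D_{R_0}$ with $\mathbb{C}\backslash\overline{D_{R_1}}$ when $R_0R_1=1$, so the relations $b_j=\bar a_{-j}$ automatically split the roots of $G_-$ and $G_+$ correctly across the annulus.
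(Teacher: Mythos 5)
Your proposal is correct and follows essentially the same route as the paper: symmetrization via the involution $\psi(z)=1/\bar z$ with the conjugate relations $b_j=\bar a_{-j}$ (as in Proposition \ref{prop: symm.surface with L''<L}), the ODE comparison of length functions as in Proposition \ref{prop: length comparison in A(omega,2)}, and the co-area/Taylor expansion argument of Theorem \ref{thm: main result}. Your explicit check that the symmetry relations are compatible with the period condition and with keeping $g=G_+/G_-$ pole- and zero-free on a symmetric annulus fills in a detail the paper leaves implicit, and your choice of the Theorem \ref{thm: main result} area argument (rather than the Cauchy--Schwarz argument of Theorem \ref{thm: area(k=0) > C}, which only bounds the area of $\Sigma$ from below and so cannot yield the upper bound needed here) is the right one.
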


The proof follows by taking the following steps:

\textit{Step One}:
\label{stepone: symmetry in A(omega,0)}
   For the annuli in $\mathcal{A}_V(\Omega ,0)$ as constructed in Theorem \ref{thm: annulus in A(omega, 0)}, we may choose the constants $a_{-1},a_0,a_1,b_{-1},b_0$ and $b_1$ so that the annulus exhibits symmetry with respect to reflection through the horizontal plane $\{ x_3 =0 \} \subset \Omega$. 

\begin{proof} (\textit{Step One})
     A reasoning parallel to the proof of Proposition \ref{prop: symm.surface with L''<L} can be employed. We deduce that by setting $a_0=\bar{b}_0$, $a_{-1} =\bar{b}_1$, and $a_1 = \bar{b}_{-1}$, there exists a slab with the characteristics described, thereby establishing the desired result.
\end{proof}

Then the same reasoning as in Proposition \ref{prop: length comparison in A(omega,2)} applies to prove the following claim.\\
\textit{Step Two}:
     Assume $\Sigma \in \mathcal{A}_V(\Omega ,0)$ is an annulus constructed in Theorem \ref{thm: annulus in A(omega, 0)}, meeting the conditions outlined in Step One, with $Flux(\Sigma)=(0,0,F_3)$. Let $C \in \mathcal{A}_V(\Omega ,2)$ be the waist of a 2-fold cover of the complete catenoid which is symmetric with respect to reflection through the horizontal plane $\{ x_3=0\}$, and has the flux vector $(0,0,F_3(\Sigma))$. Then for any non-zero $h\in [h_-,h_+]$: 
     $$\ell_{\Sigma}(h)<\ell_{C}(h),$$
     where $\ell_{\Sigma}(h)=\mathcal{H}^1(\Sigma_h)$ (resp. $\ell_C(h)=\mathcal{H}^1(C_h)$), for $h\in (h_-,h_+)$, \\ $\ell_{\Sigma}(h_+)=\lim \inf_{t\nearrow h_+}  \mathcal{H}^1(\Sigma_h)$ (resp.  $\ell_{C}(h_+)$), and $\ell_{\Sigma}(h_-)=\lim \inf_{t\searrow h_-}  \mathcal{H}^1(\Sigma_h)$ (resp.  $\ell_{C}(h_-)$). Here, $\Sigma_h$ and $C_h$ are level curves of $\Sigma$ and $C$, respectively.
\medskip

Then an argument similar to the proof of Theorem \ref{thm: area(k=0) > C} applies to complete the proof.
\medskip

Finally, a related question is:\\

\textit{Can one characterize the area minimizers in $\mathcal{A}_V(\Omega ,0)$? \\
Are these minimizers related to the surfaces constructed in Theorem \ref{thm: annulus in A(omega, 0)} ?}

\subsection*{Acknowledgment}
The author is grateful to her advisor, Jacob Bernstein, for suggesting the problem, helpful insights, guidance, and encouragement during the work.

\bibliographystyle{amsalpha}

\providecommand{\bysame}{\leavevmode\hbox to3em{\hrulefill}\thinspace}
\providecommand{\MR}{\relax\ifhmode\unskip\space\fi MR }
\providecommand{\MRhref}[2]{%
  \href{http://www.ams.org/mathscinet-getitem?mr=#1}{#2}
}
\providecommand{\href}[2]{#2}

\end{document}